\def\diff{\mathrm{d}}
\newtheorem{theorem}{Theorem}
\newtheorem{conjecture}{Conjecture}
\theoremstyle{definition}
\newtheorem{defn}{Definition}
\newtheorem{remark}{Remark}
\newtheoremstyle{mystyle}
  {3ex}
  {3ex}
  {}
  {}
  {}
  {}
  {\newline}
  {\thmname{{\bf #1}}\thmnumber{ {\bf #2:}}\thmnote{ #3}}
\theoremstyle{mystyle}
\newenvironment{customcase}[1]
  {\innercustomthm}
  {\endinnercustomthm}
\newcommand{\Nf}{K}
\newcommand{\Nt}{N}
\newcommand{\Nr}{K}
\newcolumntype{x}[1]{>{\centering\arraybackslash\hspace{0pt}}p{#1}}
\numberwithin{equation}{section}
\begin{document}

\title[Trees, trunks, and branches]{Trees, trunks, and branches -- bifurcation structure of time-periodic solutions to $u_{tt}-u_{xx}\pm u^{3}=0$}

\author{Filip Ficek\orcidlink{0000-0001-5885-7064}}
\address{University of Vienna, Faculty of Mathematics, Oskar-Morgenstern-Platz 1, 1090 Vienna, Austria}
\address{University of Vienna, Gravitational Physics, Boltzmanngasse 5, 1090 Vienna, Austria}
\email[]{filip.ficek@univie.ac.at}
\author{Maciej Maliborski\orcidlink{0000-0002-8621-9761}}
\address{University of Vienna, Faculty of Mathematics, Oskar-Morgenstern-Platz 1, 1090 Vienna, Austria}
\address{University of Vienna, Gravitational Physics, Boltzmanngasse 5, 1090 Vienna, Austria}
\email[]{maciej.maliborski@univie.ac.at}
\thanks{We acknowledge the support of the Austrian Science Fund (FWF) through Project \href{http://doi.org/10.55776/P36455}{P 36455} and the START-Project \href{http://doi.org/10.55776/Y963}{Y 963}.
}

\keywords{Time-periodic solutions; Nonlinear wave equation; Bifurcations}
\subjclass{Primary: 35B10; Secondary: 35B32, 35L71}

\begin{abstract}
	We propose a systematic approach to analysing the complex structure of time-periodic solutions to the cubic wave equation on an interval with Dirichlet boundary conditions first reported in \cite{Ficek.2024}. The analysis we present is based on a detailed study of sparse mode interactions suggested by the previous numerical work. Our results complement prior rigorous existence proofs and suggest that solutions exist for any frequency, however, they may be arbitrarily large.
\end{abstract}

\date{\today}

\maketitle

\tableofcontents

\section{Introduction}
\label{sec:Introduction}

The goal of this work is to provide a systematic description of the complex structure of time-periodic solutions first observed in \cite{Ficek.2024}, see also \cite{Arioli.2017} for related studies. In \cite{Ficek.2024} we studied time-periodic solutions to 1d cubic wave equation
\begin{equation}
	\label{eq:23.02.14_01}
	\partial_{t}^{2}u - \partial_{x}^{2}u \pm u^{3} = 0\,,
	\quad (t,x)\in\mathbb{R}_{+}\times [0,\pi]
	\,,
\end{equation}
with Dirichlet boundary conditions
\begin{equation}
	\label{eq:23.02.14_01a}
	u(t,0)=0=u(t,\pi)\,,
	\quad t\in\mathbb{R}_{+}
	\,.
\end{equation}
Following a series of works \cite{Vernov.1998,Khrustalev.2000,Khrustalev.2001}, we carried out a perturbative expansion in amplitude $\varepsilon$ and rigorously obtained solutions up to and including order $\varepsilon^{4}$, at the same time extending the previous works. This result was used to verify a spectral numerical scheme in the regime of small amplitude solutions. Subsequently, using the pseudo-arc-length continuation method \cite{Allgower.1990, Keller.1987}, we studied large solutions. In particular, we observed that solutions bifurcating from the linearised frequency $\Omega=1$ form a complex pattern on the energy-frequency diagram. Here we propose a systematic approach to studying the bifurcation structure of time-periodic solutions to \eqref{eq:23.02.14_01}-\eqref{eq:23.02.14_01a}, which provides a deeper understanding of the results of \cite{Ficek.2024} and gives arguments supporting the claims made there.

Our investigations are intended to complement the classic proofs of existence of small amplitude periodic solutions. Over the years such results have been obtained via various methods such as averaging techniques \cite{bambusi2001families}, Nash-Moser theorem \cite{Berti.2006, berti2007nonlinear}, variational principle \cite{berti2008cantor}, or Lindstedt series techniques \cite{GM.2004, GMP.2005}. Regardless of the method, the significant obstacle that one encounters is the small divisor problem, and overcoming it leads to nowhere dense sets of frequencies of the solutions. The goal of this work is to attempt another approach that leads us to a more complete picture, where the mentioned gaps in frequencies seem to be filled with large amplitude solutions.

Our strategy is to rewrite the PDE \eqref{eq:23.02.14_01} as an infinite algebraic system, the Galerkin system, involving the Fourier coefficients of the solution $u$ and the oscillation frequency $\Omega$, and then to study specific small subsystems of this infinite set of equations. We refer to these smaller subsystems as reducible systems. Their solutions produce a reducible tree, made of trunks and branches, see the definitions in Sec.~\ref{sec:ReducibleSystems}, which effectively replicates the overall pattern along which solution of the Galerkin system follows.

Based on the detailed analysis of the most relevant structures of the reducible tree and comparison with solutions to the truncated Galerkin systems we formulate the
\begin{conjecture}
	The infinite reducible tree and its rescalings according to \eqref{eq:24.05.13_01}, provide an effective description
    for the structure of time-periodic solutions to \eqref{eq:23.02.14_01}-\eqref{eq:23.02.14_01a}.
\end{conjecture}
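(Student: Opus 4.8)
Since the statement is a conjecture rather than a theorem, a ``proof'' can only be a research program, and the first task is to turn the qualitative claim into a precise approximation statement. The plan is to fix a metric in which ``approximation'' is measured --- the natural candidate is a shadowing statement asserting that every connected solution branch of the full Galerkin system lies within Hausdorff distance $O(\delta)$ (on the energy--frequency diagram, and in a weighted $\ell^{2}$ norm on the Fourier coefficients) of a branch of the reducible tree, where $\delta$ is a smallness parameter tied either to the truncation order or to the local amplitude. With this in hand, the conjecture becomes a concrete persistence-and-completeness assertion: first, that every reducible branch persists as a genuine branch, and second, that every genuine branch is shadowed by the tree.

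First I would set up the Galerkin system rigorously. Writing $u(t,x)=\sum_{m,n\ge 1} c_{m,n}\,\cos(m\Omega t)\,\sin(n x)$ and projecting \eqref{eq:23.02.14_01}--\eqref{eq:23.02.14_01a} onto each mode yields an infinite algebraic system of the form $L_{m,n}\,c_{m,n} = \mp\, N_{m,n}(c)$, with diagonal linear part $L_{m,n}=n^{2}-m^{2}\Omega^{2}$ and $N_{m,n}$ the cubic convolution of $c$ dictated by the product-to-sum identities for $\cos$ and $\sin$. The essential structural observation is that the index sets arising in $N_{m,n}$ are governed by additive relations among the $(m,n)$, so that suitable finite index sets $S$ are \emph{closed} under the cubic coupling: setting $c_{m,n}=0$ off $S$ is consistent with the equations off $S$. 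This closure is exactly what makes the reducible subsystems well defined; solving them (with $\Omega$ as continuation parameter) produces the trunks, their bifurcations at the resonant values $\Omega=n/m$ produce the branches, and the scaling symmetry \eqref{eq:24.05.13_01} organises the resulting family into the self-similar reducible tree.

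The crux is the persistence of reducible branches as genuine solutions. I would split the coefficient vector as $c=c_{S}+c_{S^{c}}$, treat the reducible solution as the leading term on $S$, and solve the complementary equations on $S^{c}$ by a Lyapunov--Schmidt reduction combined with a Nash--Moser iteration; once summable coefficient decay is established, such a Galerkin solution furnishes an actual time-periodic solution of \eqref{eq:23.02.14_01}. The neglected modes are governed by $c_{m,n}=\mp L_{m,n}^{-1}N_{m,n}(c)$ off $S$, so the whole scheme hinges on inverting the diagonal operator $L$ on $S^{c}$; this is precisely the small-divisor problem, and it forces one to restrict $\Omega$ to a Cantor-like set on which $|n^{2}-m^{2}\Omega^{2}|\ge \gamma\,(m+n)^{-\tau}$ holds away from $S$. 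The truly hard part --- and the reason this remains a conjecture --- is twofold. The bifurcation at $\Omega=1$ is \emph{completely resonant}: all diagonal modes $m=n$ satisfy $L_{m,n}=0$ simultaneously, so the kernel of the linearisation is infinite-dimensional and standard reductions do not close in finitely many modes. Moreover, the conjecture asserts more than existence on a Cantor set --- it predicts that the frequency gaps left by the small-divisor construction are \emph{filled} by large-amplitude solutions sitting on the tree. Establishing completeness in those gaps would require controlling the fully nonlinear regime where the amplitude is forced to grow as $\Omega$ approaches a resonance and no perturbative or Nash--Moser expansion about $u=0$ survives; I expect this to be the principal obstacle, and conjecturally it is resolved by following the tree's branches rather than expanding about the trivial solution.
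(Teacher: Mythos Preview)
The paper does not prove this statement --- it is explicitly a conjecture, supported by (i) closed-form analysis of low-order reducible branches in Sec.~3, (ii) side-by-side numerical comparison of $N$-reducible trees with solutions of truncated Galerkin systems in Sec.~4, and (iii) a heuristic perturbation analysis in Sec.~5 showing how additional modes deform branches into loops. There is no proof to compare against, and you are right to frame your contribution as a research program rather than a proof.

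That said, your program rests on a mistaken picture of what reducible systems are. You assert that suitable finite index sets $S$ are ``closed under the cubic coupling: setting $c_{m,n}=0$ off $S$ is consistent with the equations off $S$,'' and that ``this closure is exactly what makes the reducible subsystems well defined.'' This is false here: even the single fundamental mode $A\cos\tau\sin x$ generates, through $u^{3}$, nonzero projections onto $\cos 3\tau\sin x$, $\cos\tau\sin 3x$, and $\cos 3\tau\sin 3x$. Reducible systems are \emph{not} invariant subspaces of the full Galerkin system. In the paper's sense (see the discussion following Definition~1), reducibility means only that \emph{within} the chosen modes the cubic interactions reduce to the ``trivial'' combinations of type $k_i+k_j-k_j$, so that the projected $N\times N$ system takes the universal form \eqref{eqn:reducible_system}; nothing is claimed about the equations outside $S$, and Sec.~5 is devoted precisely to how those neglected equations feed back and distort the tree. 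Consequently your persistence step cannot be posed as ``fix $c_{S}$ at the reducible solution and solve for $c_{S^{c}}$ by Nash--Moser'': the residual on $S^{c}$ is $O(|c_{S}|^{3})$, not small, and the reducible solution is at best a leading-order ansatz that must be corrected simultaneously on $S$ and $S^{c}$. This does not kill the program, but it removes what you present as its organising structural principle and forces a genuinely different architecture for the Lyapunov--Schmidt split.
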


This conjecture suggests that the time-periodic solutions exist for every frequency $\Omega>0$. They form a complex structure that consists of infinitely many bifurcation points densely populating each solution family originating from linearised frequency. Branches bifurcating from these points connect suitably rescaled solution families and together form a ``fractal-like'' pattern.

Consequently, we claim that there exist solutions with frequencies arbitrarily close to the frequency of the linear problem but which have arbitrarily large energy. Such solutions are also densely distributed in every neighbourhood of linearised frequencies. Since they belong to the non-perturbative regime they are not included in the set of solutions discussed in the cited literature.

This manuscript is structured as follows. In Sec.~\ref{sec:Preliminaries} we pose the problem in terms of the Galerkin approximation. Next, in Sec.~\ref{sec:ReducibleSystems}, we introduce the reducible systems and analyse their lowest dimensional versions -- the essential components of the reducible tree. In Sec.~\ref{sec:ReducibleTree} we analyse the reducible tree and compare it with solutions of the truncated Galerkin systems of increasing size. These observations are essential to back up our conjecture. Perturbations of the reducible systems are analysed in Sec.~\ref{sec:PerturbationsOfReducibleSystems}.
We demonstrate there how the inclusion of additional modes modifies the reducible tree without changing its overall structure. 
The Appendix~\ref{sec:TwoModeSystemAnalysis} completes the analysis of the two-modes systems from Sec.~\ref{sec:ReducibleSystems}.

\section{Preliminaries}
\label{sec:Preliminaries}

\subsection{Statement of the problem}
\label{sec:StatementOfTheProblem}

We are looking for real smooth functions $u=u(\tau,x)$ defined on $(\tau,x)\in[0,2\pi]\times[0,\pi]$ that are solutions to
\begin{equation}
	\label{eqn:wave_eqn}
	\Omega^2 \partial_\tau^{2} u - \partial_x^2 u \pm u^3 = 0
	\,,
\end{equation}
satisfying Dirichlet boundary conditions
\begin{equation}
	\label{eqn:Dirichlet}
	u(\tau,0)=0=u(\tau,\pi)\,,\quad \tau\in[0,2\pi]\,,
\end{equation}
and being $2\pi$-periodic in time
\begin{equation}
	\label{eqn:periodicity}
	u(0,x)=u(2\pi,x)\,, \quad x\in[0,\pi]\,.
\end{equation}
Note that \eqref{eq:23.02.14_01} and \eqref{eqn:wave_eqn} are related by the time coordinate transformation
\begin{equation*}
	\label{eq:24.06.13_01}
	\tau = \Omega t
	\,.
\end{equation*}
For convenience we fix the phase of the solution such that $\partial_{\tau}u|_{\tau=0}=0$.
The conserved energy of solution to \eqref{eqn:wave_eqn} is
\begin{equation}
	\label{eq:24.05.08_01}
	E[u] = \int_{0}^{\pi}\left(\frac{1}{2}\Omega^{2}\left(\partial_{\tau}u\right)^{2}+\frac{1}{2}\left(\partial_{x}u\right)^{2}\pm\frac{1}{4}u^{4}\right)\diff{x}\,.
\end{equation}

The following rescaling
\begin{equation}
	\label{eq:24.05.13_01}
	(u(\tau,x),\Omega)\rightarrow (n\,u(m\tau,nx),n\Omega/m)
	\,,
	\quad
	n,m\in\mathbb{N}
	\,,
\end{equation}
is a symmetry of the PDE \eqref{eqn:wave_eqn}-\eqref{eqn:periodicity}. Therefore, in the analysis, we focus on solutions bifurcating from frequency $\Omega=1$, since solutions bifurcating from higher eigenfrequencies of the linearized problem can be obtained via this scaling.
%
Moreover, the following transformation
\begin{equation*}
	\label{eq:24.05.13_02}
	(u(\tau,x),\Omega) \rightarrow (\Omega^{-1}u(x-\pi/2,\tau-\pi/2),\Omega^{-1})
	\,,
\end{equation*}
relates time-periodic solutions of defocusing and focusing equations. Furthermore, all results of our analysis for defocusing equation can be easily converted to the focusing case using this transformation. Thus, we focus exclusively on the defocusing equation, i.e. Eq.~\eqref{eqn:wave_eqn} with a plus sign.

\subsection{Galerkin approach}
\label{sec:GalerkinApproach}

We write a solution to \eqref{eqn:wave_eqn} as a double Fourier series
\begin{equation}
	\label{eq:24.05.10_01}
	u(\tau,x) = \sum_{k\geq 0}\sum_{j\geq 0} a_{kj}\cos{(2k+1)\tau}\,\sin{(2j+1)x}
	\,,
\end{equation}
with $a_{kj}\in\mathbb{R}$. Plugging \eqref{eq:24.05.10_01} into \eqref{eqn:wave_eqn} and projecting onto $\cos{(2m+1)\tau}\,\sin{(2n+1)x}$ we obtain an infinite algebraic system for the expansion coefficients
\begin{multline}
	\label{eq:24.05.10_02}
	\frac{\pi^{2}}{4}\left(-(2m+1)^{2}\Omega^{2} + (2n+1)^{2}\right) a_{mn}
	\\
	+ \sum_{\substack{m_1,m_2,m_3\\n_1,n_2,n_3}}C_{2m_1+1,2m_2+1,2m_3+1,2m+1}S_{2n_1+1,2n_2+1,2n_3+1,2n+1}a_{m_1,n_1}a_{m_2,n_2}a_{m_3,n_3}=0
	\,,
\end{multline}
where the indices go over non-negative integers and the projection of the cubic term is expressed in terms of the integrals
\begin{multline*}
	C_{ijkl} \equiv \int_{0}^{\pi}\cos{i x}\,\cos{j x}\,\cos{k x}\,\cos{l x}\,\diff{x}
	\\
	= \frac{\pi}{8}\left(\delta_{-i+j+k,l}+\delta_{i-j+k,l}+\delta_{i+j-k,l}+\delta_{i+j+k,l}\right.
	\\
	\left. +\delta_{-i-j+k,l}+\delta_{-i+j-k,l}+\delta_{i-j-k,l}+\delta_{-i-j-k,l}\right)
	\,,
\end{multline*}
and
\begin{multline*}
	S_{ijkl} \equiv \int_{0}^{\pi}\sin{i x}\,\sin{j x}\,\sin{k x}\,\sin{l x}\,\diff{x}
	\\
	= \frac{\pi}{8}\left(\delta_{-i+j+k,l}+\delta_{i-j+k,l}+\delta_{i+j-k,l}-\delta_{i+j+k,l}\right.
	\\
	\left. -\delta_{-i-j+k,l}-\delta_{-i+j-k,l}-\delta_{i-j-k,l}+\delta_{-i-j-k,l}\right)
	\,,
\end{multline*}
which follow from elementary trigonometric identities. We refer to \eqref{eq:24.05.10_02} as the \textit{Galerkin system}. The energy of the solution given by the series \eqref{eq:24.05.10_01} can be expressed in terms of the Fourier coefficients $a_{kj}$. Plugging \eqref{eq:24.05.10_01} into \eqref{eq:24.05.08_01} and evaluating the integrand at $\tau=\pi/2$ we obtain
\begin{equation}
	\label{eq:24.05.11_03}
	E = \frac{\pi}{4}\Omega^{2} \sum_{n\geq 0}\left(\sum_{m\geq 0}a_{mn}(-1)^{m}(2m+1)\right)^{2}
	\,.
\end{equation}

In the following we investigate finite subsystems of the Galerkin system which we simply call \textit{finite Galerkin systems}.
We obtain a finite Galerkin system spanned by modes $A_1 \cos(2m_1+1)\tau \, \sin(2n_1+1) x$, ..., $A_\Nf \cos(2m_\Nf+1)\tau \, \sin(2n_\Nf+1) x$ by plugging the sum
\begin{equation*}\label{eqn:u_finite}
	u(\tau, x)=\sum_{k=1}^\Nf A_k \, \cos(2m_k+1)\tau \, \sin(2n_k+1)x
\end{equation*}
into \eqref{eqn:wave_eqn} and projecting the resulting equation onto $\cos(2m_k+1)\tau\, \sin(2n_k+1)x$, where $k\in\{1,...,\Nf\}$. Energy of solutions of such finite subsystems is given by \eqref{eq:24.05.11_03} with $a_{m_k n_k}=A_k$ and the remaining $a_{mn}$ set to zero.

To provide an effective description of bifurcation structures observed in \cite{Ficek.2024} we consider a special class of finite Galerkin systems introduced in the next section. They are amenable for a complete analytic description, however the analysis becomes increasingly complicated with their size. Our main focus will be on such systems that contain the mode $\cos\tau \,\sin x$. This is because we are interested in solutions bifurcating from $\Omega=1$, and this mode is their essential component.

Presented analysis predicts a very complex structure of time-periodic solutions with infinitely many details. To test our predictions, we compare the results of the following analysis with the numerical solution of the \textit{truncated Galerkin system} which is a subclass of finite Galerkin systems (for details see \cite{Ficek.2024}).
Explicitly, we consider finite Galerkin systems spanned by $\Nt^2$ modes: $a_{00}  \cos \tau\, \sin x$, $a_{01}  \cos \tau\, \sin 3x$, ..., $a_{0,\Nt-1}  \cos \tau\, \sin (2\Nt-1)x$, $a_{10}  \cos 3 \tau\, \sin x$, ..., $a_{\Nt-1,\Nt-1}  \cos (2\Nt-1)\tau\, \sin (2\Nt-1)x$. For definiteness we consider only diagonal truncations, i.e. we take the same number of spatial and temporal modes in the expansion. Solutions to the truncated Galerkin system with truncation $\Nt$ will give us an approximate solutions to \eqref{eqn:wave_eqn} in the form of a truncated double Fourier series
\begin{equation*}\label{eqn:u_finite2}
	u(\tau, x)=\sum_{m=0}^{\Nt-1} \sum_{n=0}^{\Nt-1} a_{mn} \, \cos(2m+1)\tau \, \sin(2n+1)x\, .
\end{equation*}

\section{Reducible systems}
\label{sec:ReducibleSystems}

The key role in our analysis is played by specific finite Galerkin systems that we call reducible. This is backed up by direct comparison with solutions to the truncated Galerkin systems with increasing size and also a rigorous analysis of two-mode Galerkin systems presented in Appendix~\ref{sec:TwoModeSystemAnalysis}. We start with introducing the following definition.

\begin{defn}
Let us consider a finite Galerkin system spanned by $\Nr$ modes: $A_k \cos(2m_k+1)\tau \, \sin(2n_k+1) x$ with $k\in\{1,...,\Nr\}$. We call it a \textit{reducible system} if it has the form
\begin{equation}
	\label{eqn:reducible_system}
	\left\{\begin{aligned}
		A_1\left(9A_1^2+12A_2^2+...+12A_\Nr^2\right)&=16\left[\Omega^2(2m_1+1)^2-(2n_1+1)^2\right]A_1\,,\\
		A_2\left(12A_1^2+9A_2^2+...+12A_\Nr^2\right)&=16\left[\Omega^2(2m_2+1)^2-(2n_2+1)^2\right]A_2\,,\\
		&\mathrel{\makebox[\widthof{=}]{\vdots}}\\
		A_\Nr\left(12A_1^2+12A_2^2+...+9A_\Nr^2\right)&=16\left[\Omega^2(2m_\Nr+1)^2-(2n_\Nr+1)^2\right]A_\Nr\,.
	\end{aligned}\right.
\end{equation}
\end{defn}
Reducible systems are such finite Galerkin systems in which modes mix via cubic nonlinearity only in the most trivial way.
 To illustrate what we mean by this, let us consider an expression $X=\cos k_1+\cos k_2+\cos k_3$, where $k_i$ for $i=1,2,3$ are pairwise distinct. The cubic power of $X$ can then be rewritten as a linear combination of $\cos$ functions with arguments of the form $\pm k_i\pm k_j \pm k_l$, where $i,j,l$ are any combinations of $1,2$, and $3$. It means that regardless of the choice of $k_1$, $k_2$, and $k_3$, the cube $X^3$ will always contain a term $\cos k_1$ coming from combinations like $k_1+k_1-k_1$, as well as $k_1+k_2-k_2$ and $k_1+k_3-k_3$ (and similarly for $\cos k_2$ and $\cos k_3$). Hence, these terms can be called trivial, in contrast to additional ways of getting $\cos k_i$ that are present if the arguments satisfy special relations, e.g., when $k_1=k_2+k_2-k_3$. The same idea can be realised when considering the cubic power of \eqref{eq:24.05.10_01} projected on the $\cos(2m_k+1)\tau \, \sin(2n_k+1) x$ modes. Thus, the trivial terms are always present in addition to terms that require nontrivial conditions on $m_k$ and $n_k$. The reducible systems are these that contain only expressions from the first category. This requirement leads to the \textit{reducibility conditions} giving precise constraints on modes involved in such systems. Those conditions become more stringent, the larger the system is. They will be discussed further in the following subsections.

An important feature of the reducible systems is the fact that fixing any $A_k$ to be zero in \eqref{eqn:reducible_system} leads to the $k$-th equation trivially satisfied, while the remaining equations again build a reducible system (of size $N-1$). This observation motivates us to introduce the following definition.
\begin{defn}
The $n$\textit{-modes solution} to the reducible system \eqref{eqn:reducible_system} is a solution for which exactly $n$ variables $A_k$ are non-zero.
\end{defn}

The $\Nr$-modes solution to Eq.\ \eqref{eqn:reducible_system} is simply described by the linear system of equations:
\begin{equation}\label{eqn:reducible_matrix}
	\frac{3}{16}\begin{pmatrix}
		     3 & 4 & \cdots & \cdots & 4 \\
		    4 &  3 & 4 & \cdots & 4 \\
		\vdots & \ddots & \ddots & \ddots & \vdots \\
		4 & \cdots & 4 & 3 & 4 \\
		4 & \cdots & \cdots & 4 & 3
	\end{pmatrix}
	\begin{pmatrix}
		A_{1}^{2} \\
		A_{2}^{2} \\
		\vdots \\
		A_{\Nr-1}^{2} \\
		A_{\Nr}^{2}
	\end{pmatrix}
	=
	\begin{pmatrix}
		\Omega^{2}(2m_{1}+1)^{2} - (2n_{1}+1)^{2} \\
		\Omega^{2}(2m_{2}+1)^{2} - (2n_{2}+1)^{2} \\
		\hspace{3.4ex}\vdots \\
		\Omega^{2}(2m_{\Nr-1}+1)^{2} - (2n_{\Nr-1}+1)^{2} \\
		\Omega^{2}(2m_{\Nr}+1)^{2} - (2n_{\Nr}+1)^{2} \\
	\end{pmatrix}.
\end{equation}
Since the matrix on the left-hand-side can be easily inverted, the unique solution of this linear system is given by
\begin{equation*}
	\begin{pmatrix}
		A_{1}^{2} \\
		A_{2}^{2} \\
		\vdots \\
		A_{\Nr-1}^{2} \\
		A_{\Nr}^{2}
	\end{pmatrix}
	=
		\begin{pmatrix}
		     \alpha_\Nr & \beta_\Nr & \cdots & \cdots & \beta_\Nr \\
		    \beta_\Nr &  \alpha_\Nr & \beta_\Nr & \cdots & \beta_\Nr \\
		\vdots & \ddots & \ddots & \ddots & \vdots \\
		\beta_\Nr & \cdots & \beta_\Nr & \alpha_\Nr & \beta_\Nr \\
		\beta_\Nr & \cdots & \cdots & \beta_\Nr & \alpha_\Nr
	\end{pmatrix}
	\begin{pmatrix}
		\Omega^{2}(2m_{1}+1)^{2} - (2n_{1}+1)^{2} \\
		\Omega^{2}(2m_{2}+1)^{2} - (2n_{2}+1)^{2} \\
		\hspace{3.4ex}\vdots \\
		\Omega^{2}(2m_{\Nr-1}+1)^{2} - (2n_{\Nr-1}+1)^{2} \\
		\Omega^{2}(2m_{\Nr}+1)^{2} - (2n_{\Nr}+1)^{2} \\
	\end{pmatrix}\, ,
\end{equation*}
where
\begin{equation*}
	\alpha_\Nr=-\frac{16(4\Nr-5)}{3(4\Nr-1)}\, , \quad \beta_\Nr=\frac{64}{3(4\Nr-1)} \, .
\end{equation*}
If we define
\begin{equation*}
	\sigma_{\Nr}=\beta_{\Nr}\sum_{k=1}^\Nr (2m_k+1)^2\, , \quad \xi_{\Nr}=\beta_{\Nr}\sum_{k=1}^\Nr (2n_k+1)^2 \, ,
\end{equation*}
then for any $k\in\{1,...,\Nr\}$ the solution $A_k^2$ can be written explicitly as
\begin{equation}\label{eqn:Ak2_solution}
	A_k^2 = \left[\Omega^2 \left(\sigma_{\Nr}-\frac{16}{3} (2m_k+1)^2 \right)-\left(\xi_{\Nr}-\frac{16}{3} (2n_k+1)^2 \right)\right]\, .
\end{equation}
Since we look for real solutions $A_{k}$, care needs to be taken when computing the square root of \eqref{eqn:Ak2_solution}.
This leads us to the following two remarks.

First, for the $\Nr$-modes solution to exist, the expression inside the square brackets of \eqref{eqn:Ak2_solution} must be positive for every $k\in\{1,...,\Nr\}$. This \textit{positivity condition} imposes some requirements on $\Omega$ that not necessarily can be satisfied for a given choice of modes. It means that in general reducible systems spanned by $\Nr$ modes do not posses $\Nr$-modes solutions.  We discuss this matter in greater detail in the following subsections. At this point let us note, that the positivity condition has a simple geometrical interpretation: since expression on the right-hand-side of \eqref{eqn:Ak2_solution} describes a quadric in $2\Nr$-dimensional space of $(m_1,n_1,...,m_\Nr,n_\Nr)\in\mathbb{R}^{2\Nr}$, with $\Omega$ playing the role of the parameter, the $\Nr$-modes solution exists if and only if there is a value of $\Omega$ such that the intersection of adequate regions determined by these quadrics contains a point with natural coordinates. 

On the second note, a single solution to the linear system \eqref{eqn:reducible_matrix} in $A_k^2$ (assuming the positivity condition holds), obviously leads to $2^\Nr$ $\Nr$-modes solutions to \eqref{eqn:reducible_system} differing just by the sign of each mode amplitude $A_k$.

In the following subsections, we carefully investigate reducible systems of smallest sizes. Special focus will be on systems including the mode $\cos\tau \, \sin x$ (which we call the \textit{fundamental mode}), as they play crucial role in understanding the structure of periodic solutions. In this analysis we will be using the following terminology.

\begin{defn}
The one-mode solutions to the reducible systems, where $\cos(2m+1)\tau\, \sin(2n+1)x$ represents the only non-zero mode, are called \textit{trunks of type} $(m,n)$ or $(m,n)$-\textit{trunks} for short. The trunk of type $(0,0)$ is the \textit{primary trunk}, while the other trunks will be called \textit{secondary trunks}.
\end{defn}

\begin{defn}
When $\Nr\geq 2$, we call a $\Nr$-modes solution to the reducible systems a \textit{branch} and we say that its \textit{order} is $\Nr$. More specifically, if the non-zero modes in this solution are $\cos(2m_1+1)\tau\, \sin(2n_1+1)x,\ldots, \cos(2m_\Nr+1)\tau\, \sin(2n_\Nr+1)x$, we call such a solution a \textit{branch of type} $\{(m_{1},n_{1}),\ldots,(m_{\Nr},n_{\Nr})\}$. Moreover, when a branch of order two contains the fundamental mode $\cos\tau\, \sin x$, we call it a \textit{primary branch}, otherwise it is a \textit{secondary branch}.
\end{defn}

\begin{defn}
Let us fix a truncated Galerkin system with truncation $\Nt$. From all of its $n$-modes reducible subsystems (where $1\leq n \leq \Nt$) we choose all that include the fundamental mode and enjoy the presence of an $n$-modes solution. A collection of branches of such subsystems, together with the primary trunk, constitutes the $\Nt$-\textit{reducible tree}.
\end{defn}

\begin{remark}
As solutions to the finite Galerkin systems will be mainly presented on energy-frequency graphs we will
use the nomenclature introduced above to refer to  the appropriate parts of these graphs.
\end{remark}

\subsection{One-mode systems}

In general, a finite Galerkin system spanned by a single mode $B\cos(2m+1)\tau\, \sin(2n+1)x$ is described by equation
\begin{equation}
	\label{eq:24.05.11_06}
	B\left[9B^{2}-16(2m+1)^{2}\Omega^{2} + 16(2n+1)^{2}\right] = 0
	\,.
\end{equation}
It means that every one-mode system is reducible. Equation \eqref{eq:24.05.11_06} is solved either trivially ($B=0$) or by
\begin{equation}
	\label{eq:24.05.11_07}
	B = \pm \frac{4}{3}\sqrt{(2m+1)^{2}\Omega^{2}-(2n+1)^{2}}
	\,.
\end{equation}
Using the terminology introduced above, this second solution is a trunk of type $(m,n)$. It bifurcates from the zero solution at $\Omega=({2n+1})/({2m+1})$ and its amplitude $B$ increases with frequency $\Omega$ indefinitely. The energy of this solution is given by
\begin{equation*}
	\label{eq:24.05.13_03}
	E = \frac{\pi}{4}\Omega^{2} B^2 (2 m+1)^2
	\,.
\end{equation*}

Since the primary trunk will play a crucial role in the analysis, let us consider it separately. It can be obtained as a one-mode solution to a finite Galerkin system spanned by $A\cos{\tau}\,\sin{x}$ and is given by
\begin{equation}
	\label{eq:24.05.11_05}
	A=\pm \frac{4}{3}\sqrt{\Omega^2-1}
	\,.
\end{equation}
It bifurcates from the zero solution at $\Omega=1$ and its energy is simply $E=\pi\Omega^2 A^2/4$. Note that the symmetry transformation \eqref{eq:24.05.13_01} for the PDE sends a primary trunk to a $(m,n)$-trunk and vice versa.

\subsection{Two-modes systems}
\label{sec:2modes}
We begin the investigation of two-modes reducible systems with those containing the fundamental mode, as they lead to primary branches and so they give first insights into the structure of periodic solutions to the truncated Galerkin system.
Next, for future reference, we discuss two-modes reducible systems including arbitrary modes.

\subsubsection{Reducible system spanned by $A \cos \tau\, \sin x$ and $B \cos (2m+1)\tau\, \sin (2n+1) x$}

In order for a truncated Galerkin system spanned by $A \cos \tau\, \sin x$ and $B \cos (2m+1)\tau\, \sin (2n+1) x$ to be reducible, the second mode must satisfy
\begin{equation}\label{eqn:reducibility_2A}
m\geq 1\, \land\, n\geq 1\, \land \, (m,n)\neq(1,1)\,.
\end{equation}
For completeness, the remaining (nonreducible) cases are investigated in the Appendix~\ref{sec:TwoModeSystemAnalysis}.

Reducible system spanned by $A \cos \tau\, \sin x$ and $B \cos (2m+1)\tau\, \sin (2n+1) x$ has the form
\begin{equation*}
\left\{\begin{aligned}
A \left[9 A^2+12 B^2-16\Omega^2+16\right]&=0\,,
\\
B \left[12 A^2+9 B^2 -16(2m+1)^2\Omega^2 + 16(2n+1)^2\right]&=0
\,,
\end{aligned}\right.
\end{equation*}
and its solutions have energy $E$ given by
\begin{equation*}
E=\frac{\pi}{4}\Omega^2 \left[A^2+(2m+1)^2 B^2 \right]\, .
\end{equation*}
As mentioned before, additionally to the zero solution, primary trunk, and trunk of type $(m,n)$, it may posses also a two-modes solution, i.e., a primary branch. This solution can be written explicitly as
\begin{equation}
\label{eq:24.06.18_01}
\begin{aligned}
A&=\pm \frac{4}{\sqrt{21}} \sqrt{(16m^2+16m+1)\Omega^2-(16n^2+16n+1)}\,,\\
B&=\pm \frac{4}{\sqrt{21}} \sqrt{(12n^2+12n-1)-(12m^2+12m-1)\Omega^2}\,.
\end{aligned}
\end{equation}
Hence, there is a two-modes solution when $\Omega$ satisfies the inequalities
\begin{equation}\label{eqn:2modes_Omega}
     \sqrt{\frac{16n^2+16n+1}{16m^2+16m+1}}< \Omega < \sqrt{\frac{12n^2+12n-1}{12m^2+12m-1}}\,.
\end{equation}
It is easy to check that this condition is satisfied only if $m<n$. If it holds then \linebreak\mbox{$({2n+1})/({2m+1})>1$}, so the bifurcation point of the secondary trunk lays over the primary trunk on the $E-\Omega$ diagram, see Fig.~\ref{fig:2modePlot}. In this situation these trunks are connected by a primary branch. The branch bifurcates from the primary trunk at the upper bound of \eqref{eqn:2modes_Omega} and connects to the secondary trunk at the lower bound. The energy along it decreases with the frequency and stays in the interval
\begin{equation}
	\label{eq:24.06.25_01}
    \frac{(12 n^2+12n-1) }{(12 m^2+12m-1)^2} \leq \left(\frac{16\pi}{3}(n-m) (m+n+1)\right)^{-1}E
    \leq\frac{(2 m+1)^2 (16 n^2+16n+1)}{(16 m^2+16m+1)^2}
    \,.
\end{equation}

\begin{figure}[!t]
	\centering
	\includegraphics[width=0.99\textwidth]{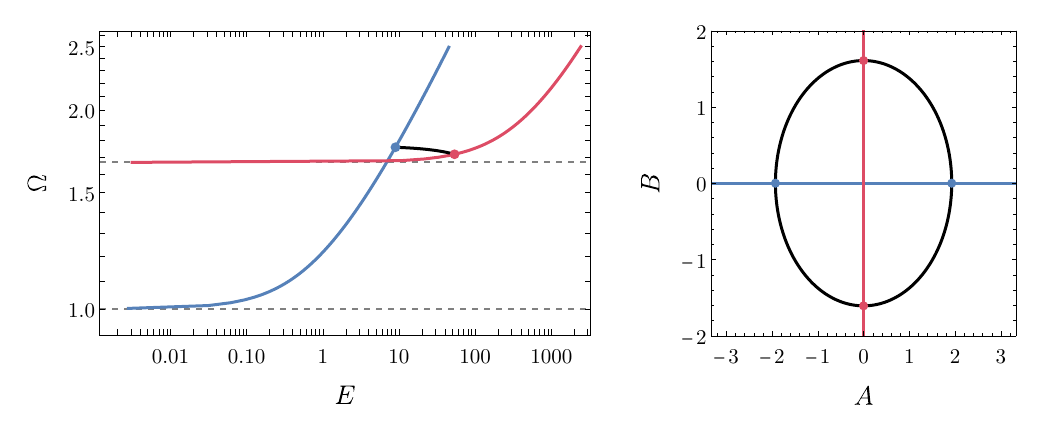}
	\caption{A primary branch (black), spanned by modes $A \cos{\tau}\,\sin{x}$ and $B \cos (2m+1)\tau\, \sin (2n+1) x$, connecting the primary trunk (blue) with a secondary trunk (red). Here we show the $(m,n)=(1,2)$ case, corresponding to the secondary trunk bifurcating from frequency $\Omega=5/3$. On the right plot we show mode amplitudes, $A$ and $B$, along the respective elements of the reducible tree. Note that due to the symmetry of the problem the four solutions (four quarters), see Eq.~\eqref{eq:24.06.18_01}, yield the same curves on the $E-\Omega$ diagram. The blue and red dots indicate bifurcation points on primary ($B=0$) and secondary ($A=0$) trunks, respectively.}
	\label{fig:2modePlot}
\end{figure}

\subsubsection{Reducible system spanned by arbitrary $B_1\cos (2m_1+1) \tau\, \sin (2n_1+1) x$ and \linebreak $B_2 \cos (2m_2+1) \tau\, \sin (2n_2+1) x$}

Even though we focus on reducible systems containing the fundamental mode, in further considerations the understanding of a general two-modes reducible system will prove handy. A careful analysis shows that the system spanned by $B_1\cos (2m_1+1) \tau\, \sin (2n_1+1) x$ and $B_2 \cos (2m_2+1) \tau\, \sin (2n_2+1) x$ is reducible if and only if the following holds
\begin{equation}\label{eqn:reducibility_2B}
m_1\neq m_2 \, \land\, n_1 \neq n_2 \, \land \, (n_2\neq3n_1+1 \lor m_2\neq3m_1+1)\, \land \, (n_1\neq3n_2+1 \lor m_1\neq3m_2+1)\,.
\end{equation}
If these reducibility conditions are satisfied, we get
\begin{subequations}
	\label{eqn:2_modes_general_eqns}
  \begin{empheq}[left=\empheqlbrace]{align}
	B_1 \left[9 B_1^2+12 B_2^2-16(2m_1+1)^2\Omega^2+ 16(2n_1+1)^2\right]&=0\, ,\\
	B_2 \left[12 B_1^2+9 B_2^2-16(2m_2+1)^2\Omega^2+ 16(2n_2+1)^2\right]&=0\, ,
  \end{empheq}
\end{subequations}
with the energy given by
\begin{equation*}
E=\frac{\pi}{4}\Omega^2 \left[(2m_1+1)^2 B_1^2 +(2m_2+1)^2 B_2^2 \right]\, .
\end{equation*}
As we already know, among its solutions there are a zero solution, a trunk of type $(m_1,n_1)$, and a trunk of type $(m_2,n_2)$. Additionally, the potential two-modes solution has the form
\begin{subequations}
	\label{eqn:2_modes_general_sols}
\begin{align}
\label{eqn:2_modes_general_sols1}
B_1&=\pm\frac{4}{\sqrt{21}}\sqrt{\eta(m_2,m_1)\Omega ^2 -\eta(n_2,n_1)}\,,\\
\label{eqn:2_modes_general_sols2}
B_2&=\pm\frac{4}{\sqrt{21}}\sqrt{\eta(m_1,m_2)\Omega ^2 -\eta(n_1,n_2)}\, ,
\end{align}
\end{subequations}
where we have introduced
\begin{align}
\label{eq:24.06.28_01}
\eta(m_1,m_2)=1+16m_1 (m_1+1)-12 m_2(m_2+1)\,.
\end{align}

Next, we study the positivity conditions for this branch. Let us point out that $\eta$ does not have a fixed sign, in particular, one can easily show that
\begin{align*}
\eta(m_1,m_2)>0 \Longleftrightarrow m_1+\frac{1}{2}>\frac{1}{\sqrt{3}}(2m_2+1)\, .
\end{align*}
Additionally, by the definition $\eta$ is an odd number so it can never be zero. It is also impossible to have $\eta(m_1,m_2)<0$ and $\eta(m_2,m_1)<0$ at the same time, since $\eta(m_1,m_2)+\eta(m_2,m_1)=2+4m_1(m_1+1)+4m_2(m_2+1)>0$. It leaves us with three distinct cases that we will now cover.

\begin{customcase}{1}[$\eta(m_2,m_1)>0 \land \eta(m_1,m_2)>0$]
Here we can safely divide both expressions under the square roots in \eqref{eqn:2_modes_general_sols} to get
\begin{equation*}
    \Omega^2> \frac{\eta(n_2,n_1)}{\eta(m_2,m_1)} \qquad \mbox{ and }  \qquad \Omega^2 > \frac{\eta(n_1,n_2)}{\eta(m_1,m_2)}\, .
\end{equation*}
Since it is impossible for both $\eta(n_1,n_2)$ and $\eta(n_2,n_1)$ to be negative, at least one of these bounds is positive. Hence, we have a well defined condition for the existence of the two-modes solution:
\begin{equation*}
    \Omega> \sqrt{\max\left(\frac{\eta(n_2,n_1)}{\eta(m_2,m_1)},\frac{\eta(n_1,n_2)}{\eta(m_1,m_2)}\right)}\, .
\end{equation*}
If the first term in the $\max$ function is larger, this branch bifurcates from the $(m_1,n_1)$-trunk. In the opposite case it comes out from the $(m_2,n_2)$-trunk. The third possibility takes place when both bounds are equal, i.e., $\eta(m_1,m_2)\eta(n_2,n_1)=\eta(m_2,m_1)\eta(n_1,n_2)$, since then both $\eta(n_1,n_2)$ and $\eta(n_2,n_1)$ are positive. This condition is equivalent to $(2n_1+1)/(2m_1+1)=(2n_2+1)/(2m_2+1)$, so both trunks bifurcate from the same frequency. Then the two-modes solution also bifurcates from zero at the same frequency. Independently of the case, after it emerges, this solution exists indefinitely as $\Omega$ increases, with both amplitudes $B_1$, $B_2$ and energy $E$ also increasing.
\end{customcase}

\begin{customcase}{2}[$\eta(m_2,m_1)>0 \land \eta(m_1,m_2)<0$]
In this case the condition for $\Omega$ becomes
\begin{equation}\label{eqn:2_modes_general_Omega}
    \frac{\eta(n_2,n_1)}{\eta(m_2,m_1)}< \Omega^2 < \frac{\eta(n_1,n_2)}{\eta(m_1,m_2)}\, .
\end{equation}
If $\eta(n_1,n_2)>0$ it obviously cannot be satisfied, hence, there is no two-modes solution. In the opposite case, when $\eta(n_1,n_2)<0$, it must hold $\eta(n_2,n_1)>0$. It gives us a possibility of existence of a two-modes solution for some interval of $\Omega$, provided it is nonempty. To check this, let us point out that
\begin{equation*}
    \frac{\eta(n_2,n_1)}{\eta(m_2,m_1)}<\frac{\eta(n_1,n_2)}{\eta(m_1,m_2)}
\end{equation*}
is equivalent to $\eta(m_1,m_2)\eta(n_2,n_1)> \eta(m_2,m_1)\eta(n_1,n_2)$, leading to $(2n_1+1)/(2m_1+1)<(2n_2+1)/(2m_2+1)$. When this condition is satisfied we observe a branch connecting both trunks. This branch emerges from $(m_2,n_2)$-trunk at $\Omega$ resulting from the lower bound of \eqref{eqn:2_modes_general_Omega} and merges with the $(m_1,n_1)$-trunk at the upper bound.
\end{customcase}

\begin{customcase}{3}[$\eta(m_2,m_1)<0 \land \eta(m_1,m_2)>0$]
It is completely analogous to Case 2. This time we observe the two-modes solution if and only if $\eta(n_2,n_1)<0$ and $(2n_1+1)/(2m_1+1)>(2n_2+1)/(2m_2+1)$.
\end{customcase}

In overall, the conditions on different types of behaviour are summarised in Table \ref{tab:2_modes_plots}.

\begin{table}[]
	\setlength\extrarowheight{2.5pt}
    \centering
    \begin{tabular}{|x{0.4cm}x{0.4cm}|x{3.5cm}x{3.5cm}x{3.5cm}|}
    	\toprule
        && $\Omega_1<\Omega_2$ & $\Omega_1=\Omega_2$ & $\Omega_1>\Omega_2$ \\
         \begin{sideways} $\eta(m_2,m_1)>0\qquad$ \end{sideways}& \begin{sideways} $\eta(m_1,m_2)>0$ \end{sideways}& \includegraphics[scale=0.5]{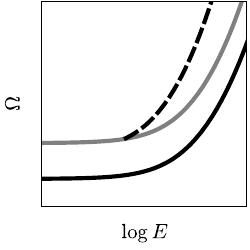} \linebreak branch bifurcating from $(m_2,n_2)$-trunk & \includegraphics[scale=0.5]{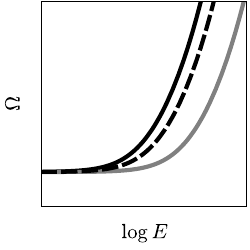} \linebreak branch bifurcating from zero & \includegraphics[scale=0.5]{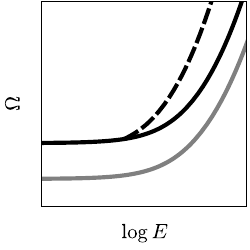} \linebreak branch bifurcating from $(m_1,n_1)$-trunk \\ \midrule

         && $\eta(n_1,n_2)>0$  & $\eta(n_1,n_2)<0$  & $\eta(n_1,n_2)<0$ \\
         & &  $\Omega_1<\Omega_2$ &  $\Omega_1\geq\Omega_2$& \\
         \begin{sideways}   $\eta(m_1,m_2)<0$ \end{sideways}& \begin{sideways}   $\eta(m_2,m_1)>0$ \end{sideways} &\includegraphics[scale=0.5]{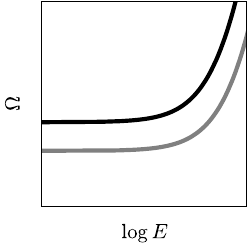} \linebreak no branch & \includegraphics[scale=0.5]{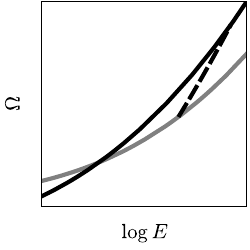} \linebreak branch connecting both trunks & \includegraphics[scale=0.5]{2m_bg.pdf} \linebreak no branch \\ \midrule

         && $\eta(n_2,n_1)>0$  & $\eta(n_2,n_1)<0$  & $\eta(n_2,n_1)<0$ \\
         & &  $\Omega_1>\Omega_2$ &  $\Omega_1\leq\Omega_2$& \\
         \begin{sideways}   $\quad\eta(m_2,m_1)<0$ \end{sideways}& \begin{sideways}   $\quad\eta(m_1,m_2)>0$ \end{sideways} &\includegraphics[scale=0.5]{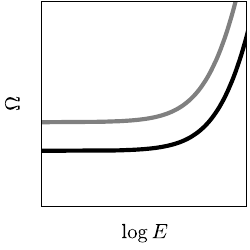} \linebreak no branch & \includegraphics[scale=0.5]{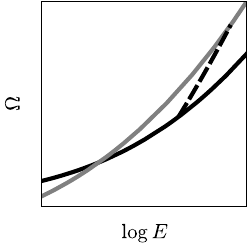} \linebreak branch connecting both trunks & \includegraphics[scale=0.5]{2m_gb.pdf} \linebreak no branch \\ \bottomrule
    \end{tabular}
    \vspace{2ex}
    \caption{Illustrative plots of solutions to \eqref{eqn:2_modes_general_eqns} depending on the trunk types $(m_1,n_1)$ and $(m_2,n_2)$. The solid lines represent one-mode solutions: black is a $(m_1,n_1)$-trunk and gray is a $(m_2,n_2)$-trunk. The dashed lines show the two-modes solutions. To simplify the presentation, we employ here $\Omega_k=(2n_k+1)/(2m_k+1)$ for $k\in\{1,2\}$.}
    \label{tab:2_modes_plots}
\end{table}

\subsection{Three-modes system}

For three-modes reducible systems we focus exclusively on the combinations including the fundamental mode. For a finite Galerkin system spanned by $A \cos \tau \sin x$, $B_1 \cos (2m_1+1) \tau\, \sin (2n_1+1) x$, and $B_2\cos (2m_2+1) \tau\, \sin (2n_2+1) x$ to be reducible, the mode numbers $m_1$, $n_1$, $m_2$, and $n_2$ must satisfy the following set of conditions. First of all, the conditions for reducibility of every two-modes subsystems must hold. It means that we require \eqref{eqn:reducibility_2A} with $(m,n)=(m_k,n_k)$ for $k\in\{1,2\}$, as well as \eqref{eqn:reducibility_2B}. Additionally, the reduction formulas
\begin{align*}
    \cos\tau\, \cos(2m_1+1)\tau \, \cos(2m_2+1)\tau=&\frac{1}{4}\left[\cos(2m_1+2m_2+1)\tau+ \cos(-2m_1+2m_2+1)\tau\right.\\
    & \left.+\cos(2m_1-2m_2+1)\tau +\cos(2m_1+2m_2+3)\tau \right]\,,
    \\
     \sin x\, \sin(2n_1+1)x \, \sin(2n_2+1)x=&\frac{1}{4}\left[\sin(2n_1+2n_2+1)x+ \sin(-2n_1+2n_2+1)x\right. \\
     &\left.+\sin(2n_1-2n_2+1)x - \sin(2n_1+2n_2+3)x \right]\,,
\end{align*}
lead to the following three extra conditions
\begin{multline*}
	(|2m_1-4m_2-1|\neq 1  \lor |2n_1-4n_2-1|\neq 1)  \\
	\land \, (|2m_2-4m_1-1|\neq 1\lor |2n_2-4n_1-1|\neq 1)\\
	\land \, (|m_1-m_2|\neq 1  \lor |n_1-n_2|\neq 1 )\,.
\end{multline*}
If all of them are satisfied, the system is described by
\begin{equation*}
	\left\{\begin{aligned}
	A \left[9 A^2+12 B_1^2+12 B_2^2 - 16\Omega^2 + 16\right]=0\,,\\
	B_1 \left[12 A^2+9 B_1^2+12 B_2^2 - 16(2m_1+1)^2\Omega^2 + 16(2n_1+1)^2\right]=0\,,\\
	B_2 \left[12 A^2+12 B_1^2+ 9 B_2^2 - 16(2m_2+1)^2\Omega^2 + 16(2n_2+1)^2\right]=0\,.\\
	\end{aligned}\right.
\end{equation*}
In addition to a zero solution, three trunks (one primary and two secondary ones), and at most three branches of order two (two primary and one secondary), that fall into the analysis performed above, one can also have a three-modes solution given by
\begin{subequations}
	\label{eqn:reducible_3_sols}
\begin{align}
A&=\pm \frac{4}{\sqrt{33}}\sqrt{\Omega^2\eta_A(m_1,m_2)-\eta_A(n_1,n_2)}\,,\label{eqn:reducible_3_sols1}\\
B_1&=\pm \frac{4}{\sqrt{33}}\sqrt{\Omega^2\eta_B(m_2,m_1)-\eta_B(n_2,n_1)}\,,\label{eqn:reducible_3_sols2}\\
B_2&=\pm \frac{4}{\sqrt{33}}\sqrt{\Omega^2\eta_B(m_1,m_2)-\eta_B(n_1,n_2)}\,,\label{eqn:reducible_3_sols3}
\end{align}
\end{subequations}
where
\begin{align*}
\eta_A(m_1,m_2)&=1+16m_1(m_1+1)+16m_2(m_2+1)\,,\\
\eta_B(m_1,m_2)&=1+16m_1(m_1+1)-28m_2(m_2+1)\,.
\end{align*}

For example, the reducibility and positivity conditions leading to a three-modes solution are satisfied for mode indices $(m_1,n_1)=(1,3)$ and $(m_2,n_2)=(3,8)$, see Fig.~\ref{fig:3modePlot} in which we illustrate the bifurcation structure of the corresponding reducible system.
The goal of the remaining part of this section is to understand for which modes such three-modes solutions exist and what are their characteristics.

\begin{figure}[!t]
	\centering
	\includegraphics[width=0.47\textwidth]{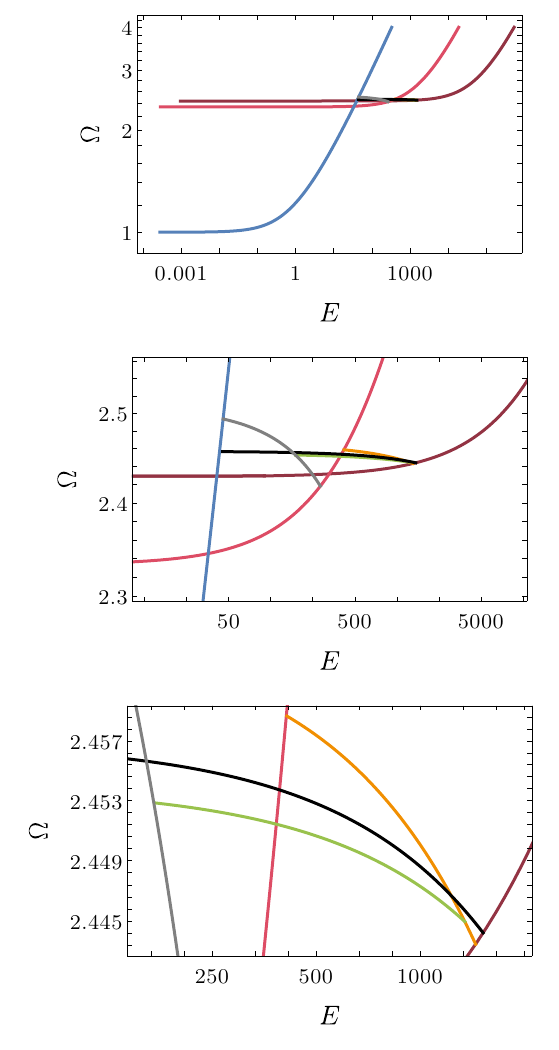}
	\includegraphics[width=0.47\textwidth]{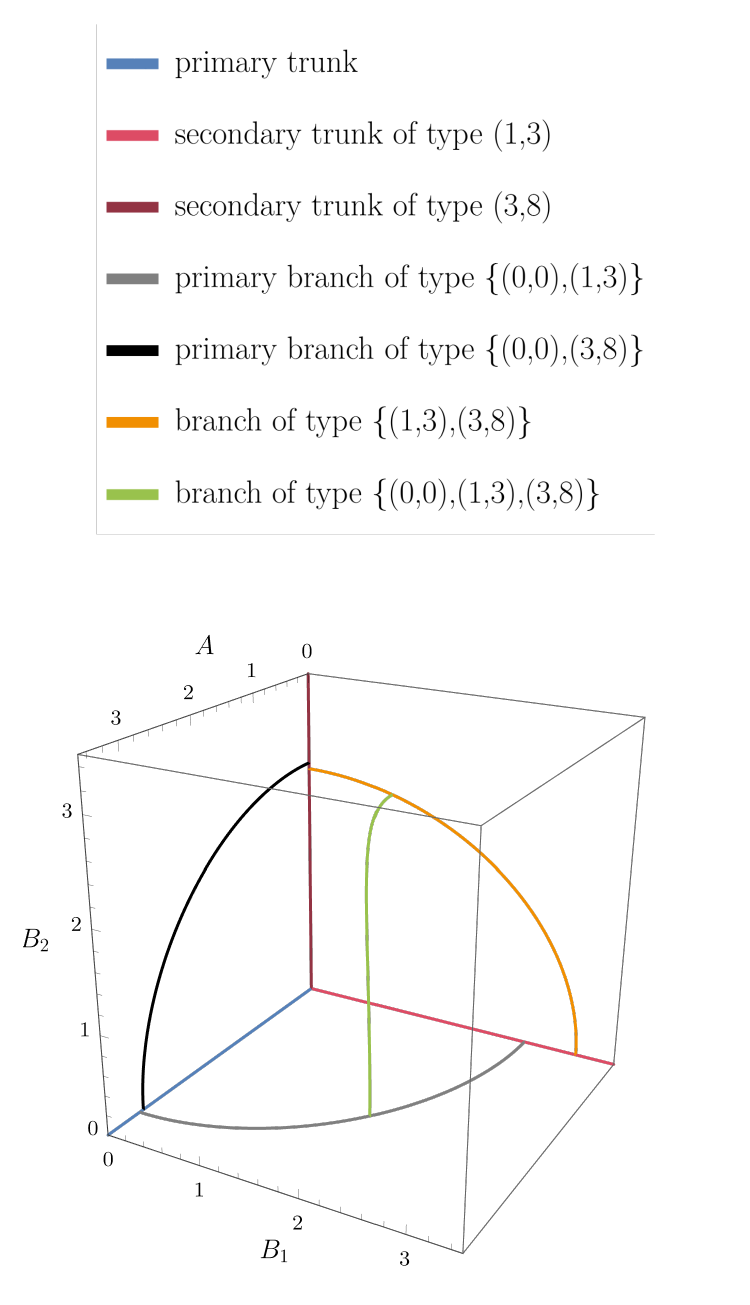}
	\caption{Illustration of higher order structures appearing in the reducible tree: solutions spanned by modes the $A\cos{\tau}\sin{x}$, $B_1\cos (2m_1+1) \tau\, \sin (2n_1+1) x$, and $B_2 \cos (2m_2+1) \tau\, \sin (2n_2+1) x$, with $(m_1,n_1)=(1,3)$ and $(m_2,n_2)=(3,8)$. The primary (blue) and secondary (light and dark red) trunks are joined by the two-modes solutions: the primary branches (black and grey) and branch of type $\{(1,3),(3,8)\}$ (orange). In addition, there exists a three-modes solution (green) connecting one of the primary branches with the branch of order two. The bifurcations points are the end points of the curves corresponding to branches while trunks extend to infinity, cf. \eqref{eq:24.05.11_07} and \eqref{eq:24.05.11_05}. The bifurcation structure is additionally visualised on the 3d diagram of the solution space $(A_{1},B_{1},B_{2})$; for clarity we show only one octant.}
	\label{fig:3modePlot}
\end{figure}

As was the case for $\eta$ from \eqref{eq:24.06.28_01}, the functions $\eta_A$ and $\eta_B$ are odd and so cannot be equal to zero. Additionally, $\eta_A>0$, thus the condition for $A$ in \eqref{eqn:reducible_3_sols1} to be real is simply
\begin{align*}
\Omega^2\geq \frac{\eta_A(n_1,n_2)}{\eta_A(m_1,m_2)}\,.
\end{align*}
It also holds $\eta_B(m_1,m_2)+\eta_B(m_2,m_1)=2-12m_1(m_1+1)-12m_2(m_2+1)$ so, since we require $m_k\geq 1$, this sum is always negative. As a result, it is impossible to have $\eta_B(m_1,m_2)>0$ and $\eta_B(m_2,m_1)>0$ at the same time. The condition of positivity of $\eta_B$ can be reformulated to
\begin{align*}
\eta_B(m_1,m_2)>0 \Longleftrightarrow \sqrt{\frac{2}{7}}\sqrt{2m_1^2+2m_1+1}>m_2+\frac{1}{2}\,.
\end{align*}
In the end, we are again left with three cases to consider.

\begin{customcase}{1}[$\eta_B(m_2,m_1)<0 \land \eta_B(m_1,m_2)<0$]
Here the conditions for all expressions under square roots in \eqref{eqn:reducible_3_sols} to be positive are
\begin{equation*}
    \Omega^2> \frac{\eta_A(n_1,n_2)}{\eta_A(m_1,m_2)}\,, \quad \Omega^2<\frac{\eta_B(n_2,n_1)}{\eta_B(m_2,m_1)}\,, \quad \mbox{ and }  \quad \Omega^2< \frac{\eta_B(n_1,n_2)}{\eta_B(m_1,m_2)}\,.
\end{equation*}
Thus, the potential frequency values must lie inside the interval given by
\begin{equation*}\label{eqn:2_modes_Case1_Omega}
\sqrt{\frac{\eta_A(n_1,n_2)}{\eta_A(m_1,m_2)}} < \Omega < \sqrt{\min\left(\frac{\eta_B(n_2,n_1)}{\eta_B(m_2,m_1)}, \frac{\eta_B(n_1,n_2)}{\eta_B(m_1,m_2)}\right)}\,.
\end{equation*}
This interval is nonempty only if the ratio on the left-hand side (which is always positive) is smaller than both of the ratios on the right-hand side. If there are values of $\Omega$ satisfying this relation, there exists a three-modes solution connecting the secondary branch of type $\{(m_1,n_1),(m_2,n_2)\}$ with one of the primary branches: of type $\{(0,0),(m_2,n_2)\}$ if the first element inside the $\min$ function is smaller, and of type $\{(0,0),(m_1,n_1)\}$ otherwise. In the case when two elements inside the $\min$ function coincide, the three-modes solution emerges from the primary trunk (since $B_1=B_2=0$ at one of its ends).
\end{customcase}

\begin{customcase}{2}[$\eta_B(m_2,m_1)>0 \land \eta_B(m_1,m_2)<0$]
Now the conditions for a three-modes solution to exist are
\begin{equation*}
    \Omega^2> \frac{\eta_A(n_1,n_2)}{\eta_A(m_1,m_2)},  \quad \Omega^2>\frac{\eta_B(n_2,n_1)}{\eta_B(m_2,m_1)} \quad \mbox{ and }  \quad \Omega^2<\frac{\eta_B(n_1,n_2)}{\eta_B(m_1,m_2)}\,.
\end{equation*}
Thus, the possible frequency is inside the interval
\begin{equation*}
\sqrt{\max\left(\frac{\eta_A(n_1,n_2)}{\eta_A(m_1,m_2)},\frac{\eta_B(n_2,n_1)}{\eta_B(m_2,m_1)}\right)} < \Omega <\sqrt{ \frac{\eta_B(n_1,n_2)}{\eta_B(m_1,m_2)}}\,.
\end{equation*}
If it is nonempty, we have a solution connecting the primary branch of type $\{(0,0),(m_1,n_1)\}$ with $\{(m_1,n_1),(m_2,n_2)\}$-branch, if the first term in the $\max$ function is larger, or with primary branch of type $\{(0,0),(m_2,n_2)\}$, in the opposite case. For the situation when the two elements inside the $\max$ function are equal, an analogous discussion as for Case 1 applies.
\end{customcase}

\begin{customcase}{3}[$\eta_B(m_2,m_1)<0 \land \eta_B(m_1,m_2)>0$]
This case is completely analogous to the previous one.
\end{customcase}

\subsection{Larger reducible systems}

These considerations can, in principle, be repeated for four-modes and higher reducible systems including a fundamental mode. By verifying both the reducibility and positivity conditions for all combinations of modes within an increasing range, we find that the lowest branches of order four containing the fundamental mode are of type $\{(0,0),(1,2),(5,9),(20,35)\}$ and $\{(0,0),(1,3),(3,8),(14,35)\}$. These higher-order solutions would be relevant for constructing $\Nt$-reducible trees with $\Nt>35$.
Such solutions introduce even finer structures on the corresponding reducible trees, but their existence does not affect our assertions presented in the next section.

\section{Reducible tree}
\label{sec:ReducibleTree}

\begin{figure}[!t]
	\centering
	\includegraphics[width=0.92\textwidth]{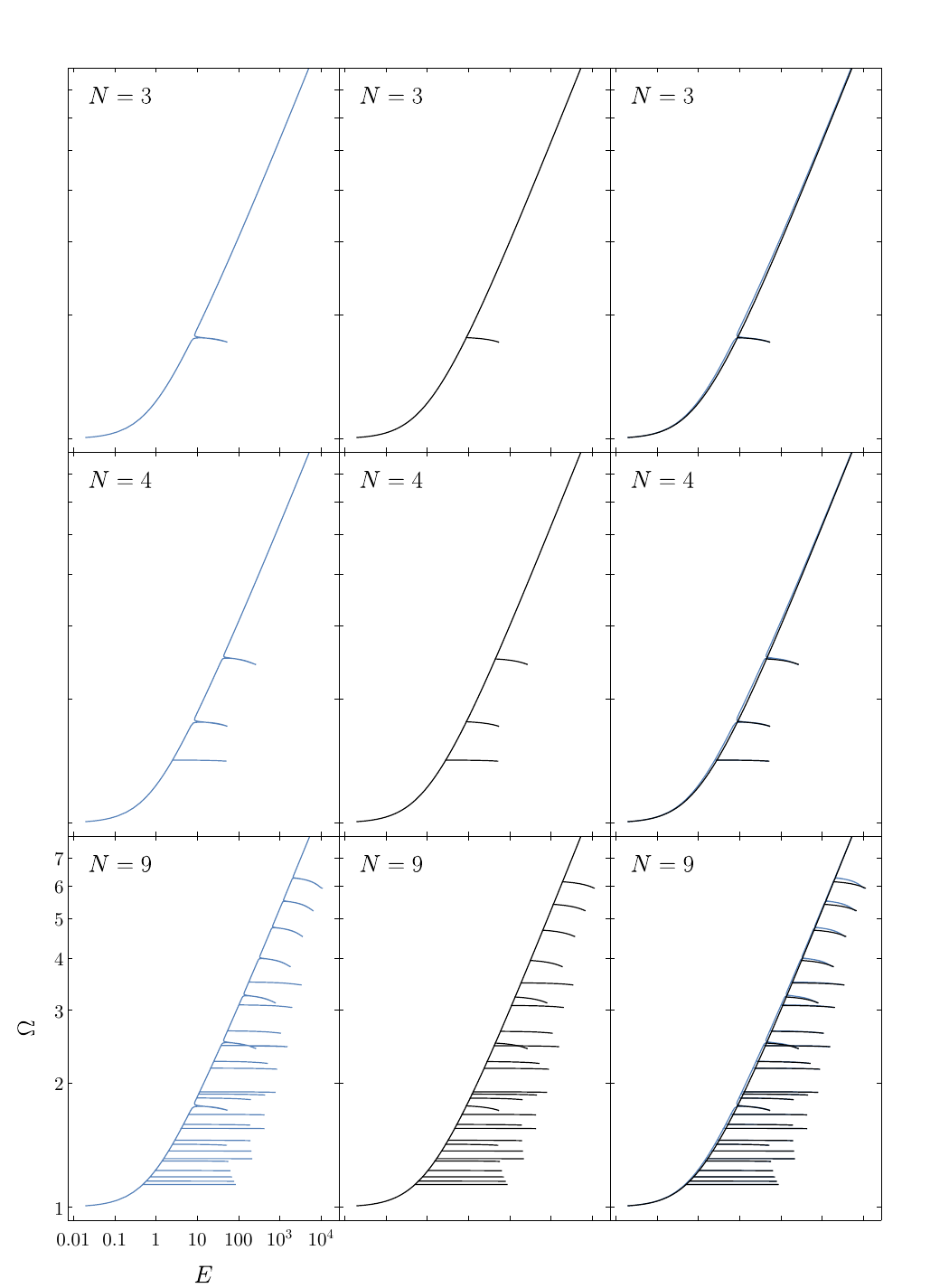}
	\caption{Comparison of solutions to the truncated Galerkin system (left column) and the corresponding $\Nt$-reducible tree (middle column) for increasing truncations $\Nt=3$ (top row), $\Nt=4$ (middle row), and $\Nt=9$ (bottom row). $\Nt$-reducible trees reproduce qualitatively the structure of solutions to the truncated Galerkin system. This is illustrated in the right column, where plots from the left and middle columns are superimposed for comparison, refer also to Fig.~\ref{fig:EOmegaZoomN4N9} for further details. Moreover, these plots show how complexity of the reducible tree increases with truncation.}
	\label{fig:EOmegaN349}
\end{figure}

First, we inspect the reducible tree constructed using its key building blocks made of the lowest dimensional solutions of reducible systems in Sec.~\ref{sec:ReducibleSystems}. For convenience we introduce the notion of a \textit{shoot} which is a point on a primary branch with the highest energy. Thus a shoot is a bifurcation point laying on a corresponding secondary trunk, see the red point in the left panel of Fig.~\ref{fig:2modePlot}.

When the size of the reducible tree grows new structures get attached to it, see middle column in Fig.~\ref{fig:EOmegaN349}. The complexity of the $\Nt$-reducible tree quickly increases with $\Nt$: for three-reducible tree we observe a single branch, for four-reducible tree three branches and 30 branches for nine-reducible tree (including 28 primary branches, one secondary branch, and a single branch of order three). 
The last case is the smallest tree for which we observe a three-modes solution previously presented in Fig.~\ref{fig:3modePlot}, see also Fig.~\ref{fig:EOmegaZoomN4N9}.

Based on the analysis of the previous sections we make the following observations for the $\Nt$-reducible tree when $\Nt$ increases:
\begin{itemize}
	\item the number of primary branches grows as $(\Nt-2)(\Nt-1)/2$,
	\item the primary branches of type $(m,n)$, see \eqref{eq:24.06.18_01}, bifurcate from the primary trunk \eqref{eq:24.05.11_05} at frequency $\Omega$ given by $\sqrt{(12n^2+12n-1)/(12m^2+12m-1)}$, with $n>m$, cf. the upper limit in \eqref{eqn:2modes_Omega}, and they end up at the shoot with the energy
	\begin{equation}
		\label{eq:24.07.25_01}
		E= \frac{16 \pi  (2 m+1)^2 (16 n^2+16n+1) (n-m) (m+n+1)}{3 (16 m^2+16m+1)^2}\,,
	\end{equation}
	see Eq.~\eqref{eq:24.06.25_01}\,,
	\item the point where the lowest laying branch bifurcates from the primary trunk approaches $\Omega=1$ as $1/\Nt$, while for the highest branch it moves up to infinity as $\mathcal{O}\left(\Nt\right)$,
	\item the energy at the shoot of the lowest laying primary branch grows as $\mathcal{O}\left(\Nt\right)$, cf. \eqref{eq:24.07.25_01} with $(m,n)=(\Nt-2,\Nt-1)$.
\end{itemize}

Using this information we conclude the following for the infinite ($\Nt=\infty$) reducible tree:
\begin{itemize}
	\item it contains infinitely many branches including also branches of higher order,
	\item bifurcation points of primary branches densely populate the primary trunk,
	\item there exist primary branches arbitrarily close to $\Omega=1$ such that energy on them becomes arbitrarily large as we approach the corresponding shoot.
\end{itemize}

The PDE symmetry \eqref{eq:24.05.13_01} transforms reducible systems into other reducible systems and so produces a rescaled reducible tree with $(m,n)$-trunk playing the role of the primary trunk. Primary branches of the reducible tree connect to the trunk of rescaled trees at their shoots, and so produce a ``fractal-like'' structure. Additionally, this same symmetry can be used to generate reducible trees for solutions bifurcating from higher linearised frequencies $\Omega=n$, with $n=2,3,\ldots\,$. This increases the complexity of the structure of periodic solutions even further.

Next, we compare a solution of the truncated Galerkin system with the corresponding $\Nt$-reducible tree. In Fig.~\ref{fig:EOmegaN349} we present the cases $\Nt=3,4$, and $9$. One notices that not only every primary branch is replicated by the reducible tree but also higher order structures are reproduced, as seen in Fig.~\ref{fig:EOmegaZoomN4N9}. Although, this approach provides an excellent qualitative description it does not capture all the details from the truncated Galerkin system. This is expected, as the reducible approximation is based on sparse mode interactions. The solution of the truncated Galerkin system has a form of a smooth curve tracing the reducible tree. In particular, the branches resolve smoothly into the loops, see Fig.~\ref{fig:EOmegaZoomN4N9}, which is an effect of perturbations by other modes (this issue is discussed in the next section). However, perturbations do not affect significantly shoots, as they still play the role of bifurcation points connecting the rescaled solutions.
The predictions of the reducible analysis improve as we consider solutions with frequencies closer to $\Omega=1$.

These observations back up Conjecture 1. Together with properties of the infinite reducible tree they let us make the statements formulated in the introduction about solutions to the PDE.

\begin{figure}[h!]
	\centering
	\includegraphics[width=0.49\textwidth]{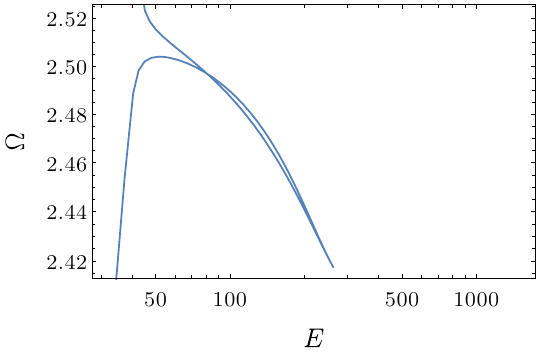}
	\includegraphics[width=0.49\textwidth]{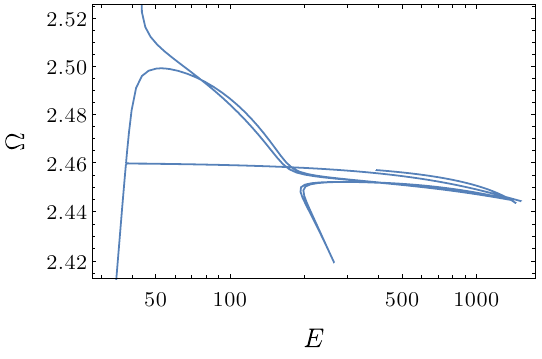}

	\includegraphics[width=0.49\textwidth]{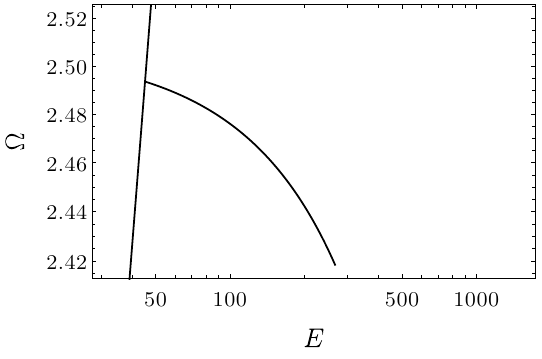}
	\includegraphics[width=0.49\textwidth]{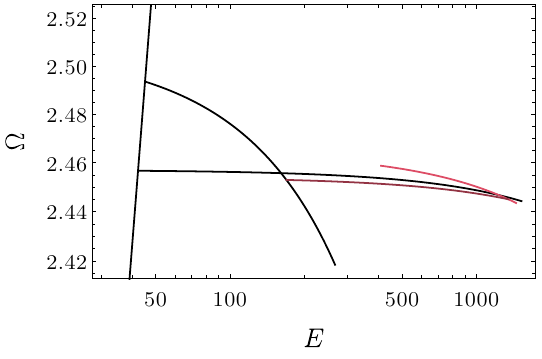}

	\includegraphics[width=0.49\textwidth]{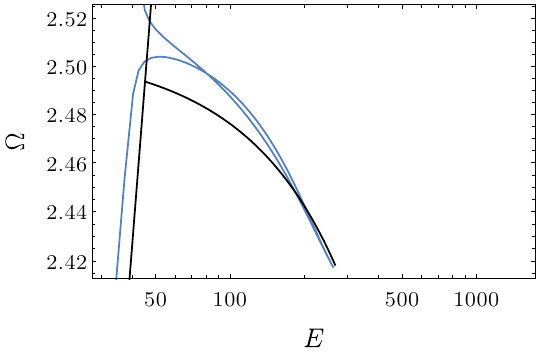}
	\includegraphics[width=0.49\textwidth]{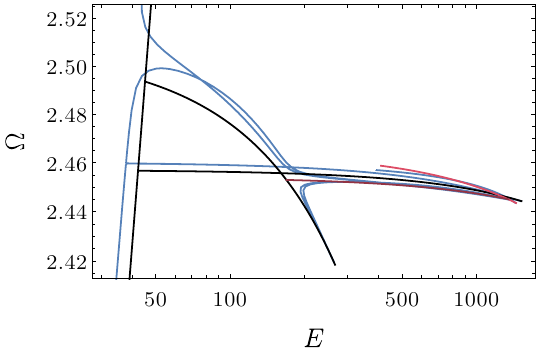}

	\caption{Zoom on one of the branches shown in Fig.~\ref{fig:EOmegaN349}, for truncations $\Nt=4$ (left) and $\Nt=9$ (right).
	In blue we plot solutions of the truncated Galerkin system, while other colours indicate the elements of the $\Nt$-reducible tree.
	When increasing $\Nt$, new primary branches appear. Besides, new features, described by branches of higher order (in red) as well as branches not connected to the primary trunk (in dark red) emerge. The $\Nt=9$ case is the lowest truncation for which a branch of order $3$ appears as a part of the reducible tree.
	}
	\label{fig:EOmegaZoomN4N9}
\end{figure}

\section{Perturbations of reducible systems}
\label{sec:PerturbationsOfReducibleSystems}

The goal of this section is to understand qualitatively how the presence of additional modes influences the structure of the reducible tree. In particular, we want to see how the branches get transformed into the loops observed in the previous section.

As the base case, we investigate the finite Galerkin system spanned by modes $A\cos \tau\, \sin x$, $B\cos 3 \tau\, \sin 5 x$, and $C \cos \tau \, \sin 3x$. Here the first and second modes set up a reducible system, while the third one plays the role of a perturbation. As a consequence, we get the following system of algebraic equations
\begin{subequations}{\label{eqn:pert_system}}
  \begin{empheq}[left=\empheqlbrace]{align}
A \left[9 A^2-9A C+12B^2-6BC+18C^2-16\Omega^2+16\right]+3 B C^2&=0\,, \label{eqn:pert_system_a}\\
B\left[9B^2+12A^2+12C^2-144\Omega^2+400\right]-3A^2C+3AC^2&=0\,, \label{eqn:pert_system_b}\\
C\left[9C^2+18A^2+12B^2+6AB-16\Omega^2+144\right]-3A^3-3A^2 B&=0\,.\label{eqn:pert_system_c}
  \end{empheq}
\end{subequations}
Let us point out, that the introduction of the perturbation breaks one of the symmetries of the reducible system. For the reducible system, if $(A,B)$ is a two-modes solution, then so are $(A,-B)$, $(-A,B)$, and $(-A,-B)$. System \eqref{eqn:pert_system} lacks this property, instead it possesses just the overall $\mathbb{Z}_2$ symmetry. If we treat $C$ as a perturbation, this loss of the symmetry lets us expect that the primary branch becomes two distinct solution curves in $E-\Omega$ plot. These solutions have been presented in Fig.~\ref{fig:Perturbation_2sys}. Indeed, the tree structure laid out by the reducible systems unties due to the presence of the perturbation. In particular, it means that some of the branches resolve into loops in $(E,\Omega)$ space.

\begin{figure}[h]
    \centering
    \includegraphics[width=0.95\textwidth]{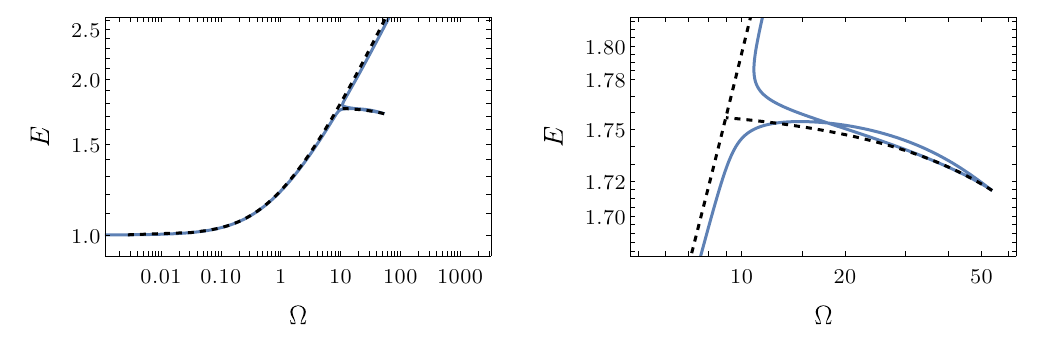}
    \includegraphics[width=0.95\textwidth]{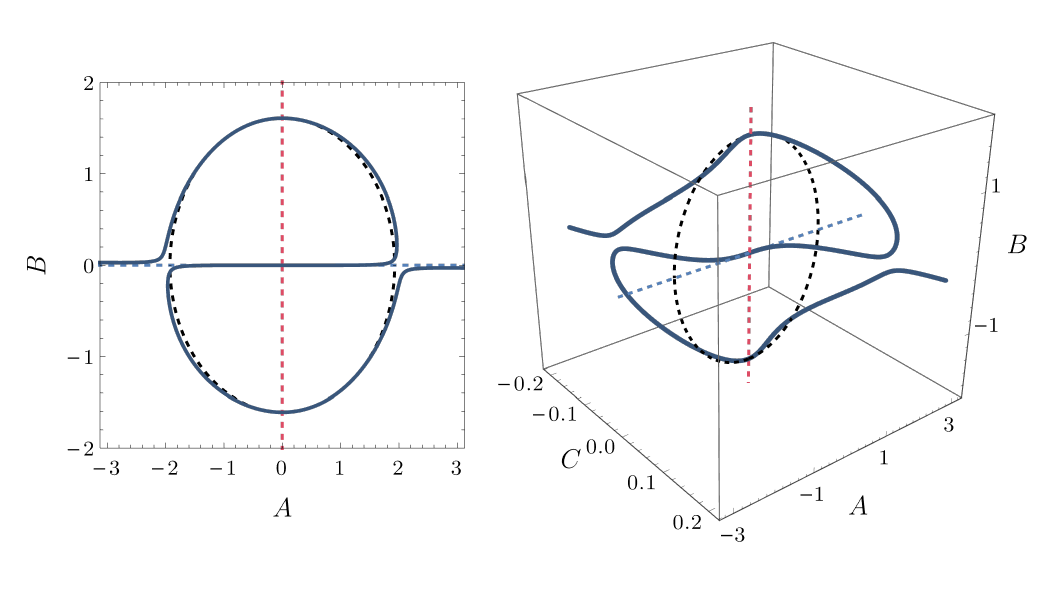}
    \caption{Solutions of perturbed reducible system \eqref{eqn:pert_system}. The solution curves of the corresponding reducible system on the $E-\Omega$ diagram (dashed black) resolve into a loop (solid blue). The bottom plots illustrate how the perturbation affects the structure coming from the reducible systems (dashed lines), see Fig.~\ref{fig:2modePlot}.}
    \label{fig:Perturbation_2sys}
\end{figure}

An analogous result can be observed by perturbing the considered system with the $C \cos 3\tau \, \sin 3x$ mode instead of $C \cos \tau \, \sin 3x$, but not with other modes. To understand which perturbations resolves the branch into the loop, let us investigate system \eqref{eqn:pert_system} in a greater detail. It still has one-mode solutions with $A=C=0$ and
\begin{equation*}
B=\pm\frac{4}{3}\sqrt{9\Omega^2-25}\, .
\end{equation*}
However, the primary trunk, previously one-mode solution given by \eqref{eq:24.05.11_05}, changes drastically. It still bifurcates from the same point but now in addition to $A\neq 0$ it must also hold $C\neq 0$, due to the $-3A^3$ term in \eqref{eqn:pert_system_c}, and $B\neq0$, as implied by terms with no $B$ in \eqref{eqn:pert_system_b}, along it. Since the primary trunk contains now both $A$ and $B$ modes, it is possible for it to smoothly follow the primary branch of type $\{(0,0),(1,2)\}$ of the reducible system and to meet with the secondary trunk, as can be seen in Fig.\ \ref{fig:Perturbation_2sys}. The main role played by the perturbation $C$ here is the introduction of additional mixing between $A$ and $B$ in \eqref{eqn:pert_system} via the presence of the terms mentioned above. A simple analysis shows us that, as already mentioned, the same effect takes place when $\cos \tau\, \sin 3 x$ is replaced by $\cos 3 \tau\,  \sin 3 x$, since then we get the system
\begin{equation*}
\left\{\begin{aligned}
A \left[9 A^2-3A C+12B^2+12C^2-12BC-16\Omega^2+16\right]&=0\,,\\
B\left[9B^2+12A^2+18C^2-144\Omega^2+400\right]-6A^2C&=0\,,\\
C\left[9C^2+12A^2+18B^2-144\Omega^2+144\right]-A^3-6A^2 B&=0\,.
\end{aligned}\right.
\end{equation*}
For any other perturbation one does not get this structure, for example, replacing $\cos \tau\,  \sin 3 x$ with $\cos 3 \tau\,  \sin x$ leads to
\begin{equation*}
\left\{\begin{aligned}
A \left[9 A^2+9A C+12B^2+18C^2-16\Omega^2+16\right]&=0\,,\\
B\left[9B^2+12A^2+18C^2-144\Omega^2+400\right]&=0\,,\\
C\left[9C^2+18A^2+18B^2-144\Omega^2+16\right]+3A^3&=0\,,
\end{aligned}\right.
\end{equation*}
and, as one can check, in this case the primary branch does not resolve into the loop.

In the construction above, the perturbative mode was able to give the system with the required structure because it was chosen in a particular way: it interacted with both of the original modes via the nonlinearity in a nontrivial way, see discussion under Definition~1. When the two modes of the reducible system are distant enough, for example when we consider a system spanned by modes $A \cos\tau\,\sin x$ and $B\cos 5\tau \,\sin 9x$, it is impossible to achieve this effect with an addition of a single mode. Then, one needs to include the whole ladder of perturbative modes in order to get the resolution of branches into loops. To see it, we may continue with this example and consider the system spanned by modes $A \cos\tau\,\sin x$, $B\cos 5\tau \,\sin 9x$, $C_1 \cos \tau \, \sin 3x$, and  $C_2 \cos 3 \tau \, \sin 5x$:
\begin{equation*}
\left\{\begin{aligned}
A \left[9A^2-9A C_1+12B^2+18C_1^2+12C_2^2-6C_1 C_2-16\Omega^2+16\right] \qquad\qquad\qquad &
\\-6B C_1 C_2 +3C_1^2 C_2 +3B C_2^2&=0\,,\\
B\left[9B^2+12A^2+12C_1^2+12 C_2^2-400\Omega^2+1296\right]+3AC_2^2-6A C_1 C_2&=0\,,\\
C_1\left[9C_1^2+18A^2+12B^2+12C_2^2+6A C_2-16\Omega^2+144\right]-3A^2 C_2-6ABC_2-3A^3&=0\,,\\
C_2\left[9C_2^2+12A^2+12B^2+12C_2^1+6A B-144\Omega^2+400\right]-3A^2 C_1+3A C_1^2-6ABC_1&=0\,.
\end{aligned}\right.
\end{equation*}
Here we can easily see that the primary branch in addition to $A\neq 0$ has to have $B\neq 0$, as in the case of the system \eqref{eqn:pert_system}. Indeed, $A\neq 0$ leads to $C_1$ and $C_2$ being nonzero, via two last equations. Then the second equation of the system gives us $B\neq 0$. Analogous perturbative ladders exist also for other reducible systems.

\appendix

\section{Analysis of two-modes systems}
\label{sec:TwoModeSystemAnalysis}

Section \ref{sec:2modes} focuses on reducible two-modes systems with one of the modes being $\cos\tau\, \sin x$ leading in particular to primary branches of order 2. In such a case the reducibility condition requires from the mode $\cos (2m+1) \tau\, \sin (2n+1) x$ to have $m, n\geq 1$ while $m>1$ or $n>1$. For completeness, here we consider other two-modes systems, i.e. those which do not fall into the reducible category. Thus, we consider finite Galerkin systems spanned by modes $A\cos\tau \sin x$ and $B\cos (2m+1) \tau\, \sin (2n+1)x$ with $m,n\geq 0$. We partially adapt the nomenclature from the main text, in particular, the solution bifurcating from zero at $\Omega=1$ with $A\neq 0$ will be called the primary trunk (independently of whether $B=0$ or not).
\begin{theorem}
    The primary trunk has an additional bifurcation point at $\Omega>1$ if and only if $m, n\geq 1$ and $n>m$.
\end{theorem}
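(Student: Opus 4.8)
The plan is to detect bifurcation points on the primary trunk as the points where the Jacobian of the two-mode Galerkin map $(A,B)\mapsto(\text{left-hand sides of the two equations})$ acquires a kernel, since a new solution branch can detach from the trunk precisely where this $2\times 2$ Jacobian degenerates. I would then organize the argument according to the reducibility dichotomy \eqref{eqn:reducibility_2A}, disposing of the reducible modes first and afterwards treating the three non-reducible families $n=0$, $m=0$, and $(m,n)=(1,1)$ one at a time.

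For reducible modes ($m,n\geq 1$, $(m,n)\neq(1,1)$) the analysis of Sec.~\ref{sec:2modes} applies directly: along the primary trunk one has $B=0$ and $9A^2=16(\Omega^2-1)$, so the Jacobian is diagonal and degenerates exactly where the bracket multiplying $B$ in the second equation vanishes. Substituting $A^2=\tfrac{16}{9}(\Omega^2-1)$ into $12A^2-16(2m+1)^2\Omega^2+16(2n+1)^2$ leaves the unique candidate $\Omega^2=(12n^2+12n-1)/(12m^2+12m-1)$, cf.\ the upper endpoint in \eqref{eqn:2modes_Omega}. As $m,n\geq 1$ keep the denominator positive, this frequency exceeds $1$ if and only if $n>m$, which gives the ``if'' direction; for $n<m$ the candidate lies below $1$, where the trunk (which needs $\Omega\geq 1$) does not exist, and for $n=m$ it collapses onto the bifurcation-from-zero at $\Omega=1$. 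Hence no additional bifurcation with $\Omega>1$ occurs among reducible modes unless $n>m$.

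It then remains to show that no non-reducible mode yields a bifurcation at $\Omega>1$, and this is where the work lies. For each non-reducible family I would recompute the Galerkin projection honestly, keeping the resonant monomials that the reducibility conditions were designed to exclude. When the two modes share only a spatial index ($n=0$, $m\geq 2$) or only a temporal index ($m=0$, $n\geq 2$) but are otherwise far apart, the sole effect is that the cross-coupling coefficient is enhanced from $12$ to $18$ while the trunk remains the line $B=0$; repeating the substitution above then forces the degeneracy condition to return a negative value of $\Omega^2$, so no real bifurcation can occur.

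The genuinely delicate subcases are the small resonances $m=1$ (with $n=0$), $n=1$ (with $m=0$), and $(m,n)=(1,1)$, where an extra monomial proportional to $A^3$ enters the second equation with no compensating factor of $B$. This forcing makes $B\neq 0$ all along the primary trunk, so the trunk is no longer the axis $B=0$ but a genuine coupled arc $\Omega\mapsto(A(\Omega),B(\Omega))$ emanating from the origin at $\Omega=1$. For these I would parametrize the trunk implicitly, evaluate the full $2\times 2$ Jacobian determinant as a polynomial in $(A,B,\Omega)$ restricted to the arc, and certify that it has no zero for $\Omega>1$, equivalently that the trunk is a smooth embedded curve meeting no other branch above $\Omega=1$. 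I expect this coupled analysis to be the main obstacle: the trunk can only be described implicitly, and one must verify the nonvanishing of a determinant along it rather than solve a simple linear pitchfork condition as in the reducible case.
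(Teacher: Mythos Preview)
Your Jacobian-degeneracy approach is sound in outline and genuinely different from the paper's, which never computes a Jacobian: instead it exhaustively enumerates the real solutions of each two-mode system case by case---reducing to a cubic in $B^{2}$ for the coupled cases---and then argues directly that the extra branches, when they exist, stay away from the primary trunk. For the reducible modes and for the uncoupled non-reducible families ($m=0,\,n\geq 2$ and $n=0,\,m\geq 2$) your computation is correct and arguably cleaner than the paper's explicit construction of the two-mode branch. For $(m,n)\in\{(1,0),(1,1)\}$ the two routes have essentially the same content: the paper's cubic has a unique real root for $\Omega>1$ (by Descartes' rule, respectively by the sign of the discriminant), which is precisely what your non-degeneracy criterion needs.

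The substantive gap is $(m,n)=(0,1)$. Here the elimination cubic in $B^{2}$ acquires \emph{three} real roots once $\Omega$ exceeds a threshold $\Omega_{*}\in(4,\sqrt{17})$, so the Jacobian determinant---once you eliminate $A,B$ to obtain a polynomial in $\Omega$ alone---\emph{does} vanish for some $\Omega>1$. Your plan to ``certify that it has no zero for $\Omega>1$'' therefore cannot succeed as stated: what you must actually show is that the degeneracy sits on one of the \emph{other} solution arcs, not on the primary trunk. The paper does this by exhibiting a separating hyperboloid $9(A^{2}+B^{2})=16\bigl((\Omega+1)^{2}-9\bigr)$ that no nontrivial solution crosses (established via a Sturm-sequence computation on a sextic in $\Omega$), trapping the primary trunk on one side; a local bifurcation argument at $\Omega=\sqrt{17}$ then shows that the two additional roots emanate from the secondary trunk on the other side. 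Some topological or algebraic separation device of this kind is unavoidable in your framework as well; without it, the bare Jacobian criterion cannot distinguish the primary trunk from the competing branches, and the hardest case of the theorem is left open.
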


\begin{proof}
The case when $m, n\geq 1$ and $m>1$ or $n>1$ has been covered in the main text. Here we analyse the remaining possible choices of $m$ and $n$.

\begin{customcase}{1}[System spanned by modes $A\cos\tau \sin x$ and $B\cos\tau\, \sin 3x$]
Here the Galerkin system becomes
\begin{subequations}{\label{eqn:appCase1}}
  \begin{empheq}[left=\empheqlbrace]{align}
A \left[9 A^2-9AB+18 B^2-16\Omega^2+16\right]&=0\,, \label{eqn:appCase1a}\\
B \left[18 A^2+9 B^2-16\Omega^2+144\right]-3A^3&=0\,. \label{eqn:appCase1b}
  \end{empheq}
\end{subequations}
Putting $A=0$ solves \eqref{eqn:appCase1a} and leads to either $B=0$ or a one-mode solution given by
\begin{equation}
\label{eq:24.06.12_02}
B=\pm \frac{4}{3} \sqrt{\Omega^2-9}\,.
\end{equation}
This solution bifurcates from zero at $\Omega=3$. Alternatively, \eqref{eqn:appCase1a} can be solved by
\begin{equation*}
A=\frac{B}{2}\pm\frac{1}{6}\sqrt{64(\Omega^2-1)-63 B^2}\,.
\end{equation*}
The following calculations can be carried on independently of the choice of the sign in this expression. For definiteness, we choose the plus sign. Then, plugging such $A$ into \eqref{eqn:appCase1b} leads to a third-order polynomial in $B^2$:
\begin{align}
	\label{eq:24.06.12_01}
    35721 B^6-54432 \left(\Omega ^2+1\right)B^4+20736\left(\Omega ^4+2 \Omega ^2+29\right)B^2-2048 \left(\Omega ^2-1\right)^3 = 0
    \,,
\end{align}
It is immediate to see that one of its solutions is a primary trunk. The discriminant of \eqref{eq:24.06.12_01} is given by
\begin{align*}
    \Delta=20643001562824704 \left(\Omega ^2+23\right)^2 \left(\Omega ^8-4 \Omega ^6-186 \Omega ^4-132 \Omega ^2-2751\right)
    \,.
\end{align*}
From the Descartes' Rule of Signs it follows that $\Delta$ has one positive root $\Omega_{*}$. One can check that it is located between $\Omega=4$ and $\Omega=\sqrt{17}$. For $\Omega<\Omega_{*}$ there exists one value of $B^{2}$ giving a solution to \eqref{eq:24.06.12_01}, the primary trunk of \eqref{eqn:appCase1}, while for $\Omega>\Omega_{*}$ there are three solutions.
In the following we show that the two extra solutions for $\Omega>\Omega_{*}$, neither the secondary trunk \eqref{eq:24.06.12_02} do not cross the primary trunk.

Let us consider the hyperboloid in $(\Omega,A,B)$ space given by
\begin{equation*}
9(A^2+B^2)=16\left((\Omega+1)^2-9\right).
\end{equation*}
As we will see, no solution crosses it, except for the trivial one $\Omega=2$, $A=B=0$. In particular, it means that the primary trunk lies outside it, while the secondary one is inside. Then we will show that additional roots bifurcate from the secondary trunk, concluding the proof. We consider the following system of equations
\begin{subequations}
  \begin{empheq}[left=\empheqlbrace]{align}
9 A^2-9AB+18 B^2-16\Omega^2+16&=0\,, \label{eqn:appCase1Ha}\\
B \left[18 A^2+9 B^2-16\Omega^2+144\right]-3A^3&=0\,, \label{eqn:appCase1Hb}\\
9(A^2+B^2)-16\left((\Omega+1)^2-9\right)&=0\,. \label{eqn:appCase1Hc}
  \end{empheq}
\end{subequations}

One can calculate $B$ from \eqref{eqn:appCase1Hc} and plug it into \eqref{eqn:appCase1Ha} and \eqref{eqn:appCase1Hb}. Again, we have the freedom of the choice of the sign, but without the loss of generality we may choose positive $B$. As a result, \eqref{eqn:appCase1Ha} becomes
\begin{equation*}
3A \sqrt{16 \left(\Omega ^2+2 \Omega -8\right)-9 A^2}=9A^2-16\left(\Omega^2+4\Omega-15\right)\,.
\end{equation*}
This square root is present also in \eqref{eqn:appCase1Hb}, so we can get rid of it there. Then, after squaring both sides of the equation above, we get the following system
\begin{subequations}
  \begin{empheq}[left=\empheqlbrace]{align}
-18 A^4+16 A^2 \left(3 \Omega ^2+10 \Omega -38\right)-\frac{256}{9} \left(\Omega ^2+4 \Omega -15\right)^2&=0\,, \label{eqn:appCase1H2a}\\
-27 A^4+36 A^2 \left(\Omega ^2+2 \Omega -16\right)+64 \left(2 \Omega ^3+9 \Omega ^2-26 \Omega -15\right)&=0\,. \label{eqn:appCase1H2b}
  \end{empheq}
\end{subequations}
Then we get
\begin{equation*}
A^2 \left[A^2 (81-6 \Omega  (\Omega +7))+8 \left(\Omega ^4+12 \Omega ^3-160 \Omega +202\right)\right]=0\,,
\end{equation*}
as a linear combination of \eqref{eqn:appCase1H2a} and \eqref{eqn:appCase1H2b}. By disregarding $A=0$ (for this case we have the analytic formula telling us that the solution does not cross the hyperboloid), we get a simple expression for $A^2$ that can be plugged into  \eqref{eqn:appCase1H2b}, finally giving us a single polynomial equation in $\Omega$
\begin{equation*}
(2\Omega+1)(\Omega ^6+6 \Omega ^5-84 \Omega ^4+40 \Omega ^3+2328 \Omega ^2-6588 \Omega +4902)=0\,.
\end{equation*}
Now we want to find all positive roots of the sixth order polynomial there. By evaluating it at points $\Omega=-12$, $\Omega=-10$, $\Omega=0$, one can check that it has at least two negative roots. The discriminant of this polynomial is positive, hence all its roots are simple. By constructing the Sturm sequence we can show that the mentioned two negative roots are the only two real roots of this polynomial. Hence, it does not have any positive roots and, as a result, no non-trivial solutions of \eqref{eqn:appCase1} cross the hyperboloid.

The last step is to show that the additional solutions emerging for $\Omega>\Omega_{*}$ are inside the hyperboloid. We do it by showing that those solutions bifurcate from the secondary trunk. Let us introduce $x=A$, $y=B-4\sqrt{\Omega^2-9}/3$. Then \eqref{eqn:appCase1} can be rewritten as $F(\Omega,x,y)=0$, where $F:\mathbb{R}\times\mathbb{R}^2\to\mathbb{R}^2$ is
\begin{align*}
F(\Omega,x,y)=&\left(x(9x^2+16(\Omega^2-17)-9xy+18y^2+12\sqrt{\Omega^2-9}(4xy-x^2)),\right.\\
&\left.-3x^2+32y(\Omega^2-9)+9y^2+18x^2 y+12 \sqrt{\Omega^2-9}(2x^2+3y^2)\right)\,.
\end{align*}
Then for $\Omega\geq 3$, $F(\Omega,0,0)=0$ is just the secondary trunk. This form is ready to use the standard local bifurcation theory to show that $\Omega=\sqrt{17}$ indeed is the bifurcation point, from which two additional branches bifurcate.
\end{customcase}

\begin{customcase}{2}[System spanned by modes $A\cos\tau \sin x$ and $B\cos\tau\,\sin (2n+1)x$ with $n\geq 2$]
This case is simple, since the system of equations becomes
\begin{equation*}
\left\{\begin{aligned}
A \left[9 A^2+18 B^2-16\Omega^2+16\right]&=0\,, \\
B \left[18 A^2+9 B^2-16\Omega^2+16(2n+1)^2\right]&=0\,.
\end{aligned}\right.
\end{equation*}
Apart from the trivial zero solution, there also exist two one-mode solutions: the primary trunk
\begin{equation*}
A=\pm \frac{4}{3}\sqrt{\Omega^2-1}\,, \qquad B=0\,,
\end{equation*}
and  the secondary trunk emerging from $\Omega=2n+1$
\begin{equation*}
A=0\,, \qquad B=\pm\frac{4}{3}\sqrt{\Omega^2-(2n+1)^2}\,.
\end{equation*}
Analysis analogous to the one presented in the main text shows that for $\Omega>\sqrt{8 n^2+8 n+1}$ there exists an additional solution bifurcating from the secondary trunk and given by
\begin{equation*}
A=\pm \frac{4}{3\sqrt{3}}\sqrt{\Omega^2-(8n^2+8n+1)}\,, \qquad B=\pm\frac{4}{3\sqrt{3}}\sqrt{\Omega^2-(4n^2+4n-1)}\,.
\end{equation*}
This solution never crosses the primary trunk.
\end{customcase}

\begin{customcase}{3}[System spanned by modes $A\cos\tau \sin x$ and $B\cos 3\tau\, \sin x$]
Here we get
\begin{subequations}
  \begin{empheq}[left=\empheqlbrace]{align}
A \left[9 A^2+9AB+18 B^2-16\Omega^2+16\right]&=0\,, \label{eqn:appCase3a}\\
B \left[18 A^2+9 B^2-144\Omega^2+16\right]+3A^3&=0\,. \label{eqn:appCase3b}
  \end{empheq}
\end{subequations}
When $A=0$, we either have $B=0$ or
\begin{equation*}
B=\pm 4\sqrt{\Omega^2-\frac{1}{9}}\,,
\end{equation*}
i.e., a one-mode solution bifurcating from zero at $\Omega=1/3$. The other possible solution of \eqref{eqn:appCase3a} can be written as
\begin{equation*}
A=-\frac{B}{2}\pm\frac{1}{6}\sqrt{64(\Omega^2-1)-63 B^2}\,.
\end{equation*}
Plugging this expression (without the loss of generality we can choose the plus sign) into \eqref{eqn:appCase3b} leads to a third-order polynomial in $B^2$
\begin{align*}
	35721 B^6+54432 \left(\Omega ^2+1\right)B^4+20736  \left(1+2 \Omega ^2+29 \Omega ^4 \right)^2 B^2-2048 \left(\Omega ^2-1\right)^3\,.
\end{align*}
For $\Omega<1$ all its terms have positive coefficients, hence, it has no positive roots. Using the Descartes’ Rule of Signs we conclude that for $\Omega>1$ this polynomial has one positive solution which gives us a primary trunk bifurcating from $\Omega=1$.
\end{customcase}

\begin{customcase}{4}[System spanned by modes $A\cos\tau \sin x$ and $B\cos 3\tau\, \sin 3x$]
This time the finite system has the form
\begin{subequations}
  \begin{empheq}[left=\empheqlbrace]{align}
A \left[9 A^2-3AB+12 B^2-16\Omega^2+ 16\right]&=0\,, \label{eqn:appCase4a}\\
B \left[12 A^2+9 B^2 -144\Omega^2+ 144\right]-A^3&=0\,. \label{eqn:appCase4b}
  \end{empheq}
\end{subequations}
We can solve \eqref{eqn:appCase4a} by putting $A=0$, then $B$ must be either equal to zero or
\begin{equation*}
B=\pm4\sqrt{\Omega^2-1}\,.
\end{equation*}
Additional solutions to \eqref{eqn:appCase4a} can be written as
\begin{equation*}
A=\frac{B}{6}\pm\frac{1}{6}\sqrt{64(\Omega^2-1)-47 B^2}\,.
\end{equation*}
Plugging one of these expressions into \eqref{eqn:appCase4b} leads to a third-order polynomial equation in $B^2$:
\begin{align}\label{eqn:appCase4P}
	24759 B^6+485568 \left(\Omega ^2-1\right)B^4+5577984  \left(\Omega ^2-1\right)^2 B^2-2048 \left(\Omega ^2-1\right)^3 = 0
	\,
\end{align}
with discriminant
\begin{align*}
	\Delta=-9853595053368871187644416(\Omega^2-1)^6
	\,.
\end{align*}
For $\Omega=1$ the only root of \eqref{eqn:appCase4P} is $B=0$. If $\Omega\neq 1$ we have $\Delta<0$ so \eqref{eqn:appCase4P} has a single real solution. Since the value of \eqref{eqn:appCase4P} at $B=0$ is $-2048 \left(\Omega ^2-1\right)^3$, it is negative for $\Omega<1$ and positive for $\Omega>1$. As a result, for $\Omega>1$ there is a solution with both modes $A$ and $B$ that bifurcates from $\Omega=1$ (primary trunk). Together with the trunk $A=0$ also emerging from $\Omega=1$ they are all real solutions of the system.
\end{customcase}

\begin{customcase}{5}[System spanned by modes $A\cos\tau \sin x$ and $B\cos (2m+1)\tau\, \sin x$ with $m\geq 2$]
This case is similar to Case 2, because the system has the form
\begin{equation*}
\left\{\begin{aligned}
A \left[9 A^2+18 B^2 -16\Omega^2 + 16\right]&=0\, ,\\
B \left[18 A^2+9 B^2 -(2m+1)^2\Omega^2 + 16\right]&=0\, .
\end{aligned}\right.
\end{equation*}
In addition to the trivial zero solution, it has also two one-mode solutions. One of them bifurcates from $\Omega=1$ and is given by
\begin{align*}
    A=\pm \frac{4}{3}\sqrt{\Omega^2-1}\,, \qquad B=0\,,
\end{align*}
while the other one bifurcates from zero at $\Omega=1/(2m+1)$
\begin{align*}
    A=0\,, \qquad B=\pm \frac{4}{3}\sqrt{(2m+1)^2\Omega^2-1}\,.
\end{align*}
The two-modes solution given by
\begin{align*}
    A=\pm \frac{4}{3\sqrt{3}} \sqrt{(8m^2+8m+1)\Omega^2-1}\,, \qquad B=\pm \frac{4}{3\sqrt{3}} \sqrt{(1-4m-4m^2)\Omega^2-1}\,.
\end{align*}
is not real for $m\geq 2$, hence it has to be excluded. Here we have only a primary and secondary trunk.
\end{customcase}

\end{proof}

\printbibliography

\end{document}